\begin{document}

\newtheorem{theorem}{Theorem}
\newtheorem{acknowledgement}[theorem]{Acknowledgement}
\newtheorem{proposition}[theorem]{Proposition}
\newtheorem{lemma}[theorem]{Lemma}
\newenvironment{proof}[1][Proof]{\noindent\textbf{#1.} }{\ \rule{0.5em}{0.5em}}

\title{A mathematical model and inversion procedure
for Magneto-Acousto-Electric Tomography (MAET)}

\author{Leonid Kunyansky}

\maketitle

\begin{abstract}
Magneto-Acousto-Electric Tomography (MAET), also known as the Lorentz force or
Hall effect tomography, is a novel hybrid modality designed to be a
high-resolution alternative to the unstable Electrical Impedance Tomography.
In the present paper we analyze existing mathematical models of this method,
and propose a general procedure for solving the inverse problem associated
with MAET. It consists in applying to the data one of the algorithms of
Thermo-Acoustic tomography, followed by solving the Neumann problem for the
Laplace equation and the Poisson equation.

For the particular case when the region of interest is a cube, we present an
explicit series solution resulting in a fast reconstruction algorithm. As we
show, both analytically and numerically, MAET is a stable technique yilelding
high-resolution images even in the presence of significant noise in the data.

\end{abstract}

%\address{
%Department of Mathematics,
%University of Arizona,
%Tucson, AZ 85721,
%USA
%} \ead{leonk@math.arizona.edu}
%
%\ams{44A12, 92C55, 65R32}
%
%\textit{Keywords}: Magneto-Acousto-Electric tomography, spherical means, time
%reversal, explicit inversion formulae, synthetic focusing

\section*{Introduction}

Magneto-Acousto-Electric Tomography (MAET) is based on the measurements of the
electrical potential arising when an acoustic wave propagates through
conductive medium placed in a constant magnetic field \cite{Montalibet,Wen}.
The interaction of the mechanical motion of the free charges (ions and/or
electrons) with the magnetic field results in the Lorentz force that pushes
charges of different signs in opposite directions, thus generating Lorentz
currents within the tissue. The goal of this technique coincides with that of
the Electrical Impedance Tomography (EIT): to reconstruct the conductivity of
the tissue from the values of the electric potential measured on the boundary
of the object. EIT is a fast, inexpensive, and harmless modality, which is
potentially very valuable due to the large contrast in the conductivity
between healthy and cancerous tissues~\cite{BB1,Bor02,CIN}. Unfortunately, the
reconstruction problems arising in EIT are known to be exponentially unstable.

MAET is one of the several recently introduced hybrid imaging techniques
designed to stabilize the reconstruction of electrical properties of the
tissues by coupling together ultrasound waves with other physical phenomena.
Perhaps the best known examples of hybrid methods are the Thermo-Acoustic
Tomography (TAT)~\cite{KrugerTAT} and the closely related Photo-Acoustic
modality, PAT~\cite{KrugerPAT,Oraev94}). In the latter methods the amount of
electromagnetic energy absorbed by the medium is reconstructed from the
measurements (on the surface of the object) of acoustic waves caused by the
thermoacoustic expansion (see e.g. \cite{KuKuRev,WangCRC}). Another hybrid
technique, designed to overcome shortcomings of EIT and yield stable
reconstruction of the conductivity is Acousto-Electric Impedance Tomography
(AEIT)~\cite{WangAET}. It couples together acoustic waves and electrical
currents, through the electroacoustic effect (see~\cite{Lavand}). Although
AEIT has been shown, both theoretically and in numerical simulations, to be
stable and capable of yielding high-resolution
images~\cite{AmmariAET,Cap,KuKuAET,KuKuAET1}, the feasibility of practical
reconstructions is still in question due to the extreme weakness of the
acousto-electric effect.

In the present paper we analyze MAET which also aims to reconstruct the
conductivity in a stable fashion. In MAET this goal is achieved by combining
magnetic field, acoustic excitation and electric measurements, coupled through
the Lorentz force. The physical foundations of MAET were established in
\cite{Montalibet} and~\cite{Wen}. In particular, it was shown in
\cite{Montalibet} that if the tissue with conductivity $\sigma(x)$ moves with
velocity $V(x,t)$ within the constant magnetic field $B$, the arising Lorentz
force will generate Lorentz currents $J_{L}(x,t)$ whose intensity and
direction are given (approximately) by the following formula%
\begin{equation}
J_{L}(x,t)=\sigma(x)B\times V(x,t).\label{E:Lorentz}%
\end{equation}

Originally~\cite{Montalibet,Wen} it was proposed to utilize a focused
propagating acoustic pulse to induce electric response from different parts of
the object. In~\cite{HaiderXu} wavepackets of a certain frequency were used in
a physical experiment to reconstruct the current density in a thin slab of a
tissue. Similarly, in~\cite{AmmariMAET} the use of a perfectly focused
acoustic beam was assumed in a theoretical study and in numerical simulations.
However, in the above-quoted works accurate mathematical model(s) of such
beams were not presented. Moreover, the feasibility of focusing a fixed
frequency acoustic beam at an arbitrary point inside the body in a fully 3D
problem is problematic. In a theoretical study~\cite{Roth} the use of plane
waves of varying frequencies was proposed instead of the beams. This is a more
realistic approach; however, the analysis in that work relies on several crude
approximations (the conductivity is assumed to be close to 1, and the electric
field is approximated by the first non-zero term in the multipole expansion).

To summarize, the existing mathematical models of measurements in MAET are of
approximate nature; moreover, some of them contradict to others. For example,
it was found in~\cite{HaiderXu} that if one uses a pair of electrodes to
measure the voltage (difference of the potentials) at two points $a$ and $b$
on the boundary of the body, the result is the integral of the mixed product
of three vectors: velocity $V$, magnetic induction $B$ and the so-called lead
current $J_{ab}$ (the current that would flow in the body if the difference of
potentials were applied at points $a$ and $b$). The approximate model
in~\cite{Roth} implicitly agrees with this conclusion. However,
in~\cite{AmmariMAET} it is assumed that if the pulse is focused at the point
$x$, the measurements will be proportional to the product of the electric
potential $u(x)$ and conductivity $\sigma(x)$ at that point. This assumption
contradicts the previous models; it also seems to be unrealistic since
potential $u(x)$ is only defined up to an arbitrary constant, while the
measurements are completely determined by the physics of the problem.

In the present paper we first derive, starting from equation~(\ref{E:Lorentz}%
), a rigorous and sufficiently general model of the MAET measurements. Next,
we show that if a sufficient amount of data is measured, one can reconstruct,
almost explicitly and in a stable fashion the conductivity of the tissue. For
general domains the reconstruction can be reduced to the solution of the
inverse problem of TAT followed by the solution of the Neumann problem for the
Laplace equation, and a Poisson equation. In the simpler case of a rectangular
domain the reconstruction formulae can be made completely explicit, and the
solution is obtained by summing several Fourier series. In the latter case the
algorithm is fast, i.e. it runs in $O(n^{3}\log n)$ floating point operations
on a $n\times n\times n$ grid. The results of our numerical simulations show
that one can stably recover high resolution images of the conductivity of the
tissues from MAET\ measurements even in the presence of a significant noise in
the data.

\section{Formulation of the problem}

Suppose that the object of interest whose conductivity $\sigma(x)$ we would
like to recover is supported within an open and bounded region $\Omega$ with
the boundary $\partial\Omega$. For simplicity we will assume that $\sigma(x)$
is smooth in $\Omega,$ does not approach 0, and equals 1 in the vicinity of
$\partial\Omega$; the support of $\sigma(x)-1$ lies in some $\Omega_{1}%
\subset\Omega$ and the distance between $\Omega_{1}$ and $\partial\Omega$ is
non-zero. The object is placed in the magnetic field with a constant magnetic
induction~$B$, and an acoustic wave generated by a source lying outside
$\Omega$ propagates through the object with the velocity $V(x,t)$. Then the
Lorentz force will induce Lorentz currents in $\Omega$ given by
equation~(\ref{E:Lorentz}). Throughout the text we assume that the electrical
interactions occur on much faster time scale than the mechanical ones, and so
all currents and electric potentials depend on $t$ only as a parameter. In
addition to Lorentz currents, the arising electrical potential $u(x,t)$ will
generate secondary, Ohmic currents with intensities given by Ohm's law%
\begin{equation}
J_{O}(x,t)=\sigma(x)\nabla u(x,t).\nonumber
\end{equation}
Since there are no sinks or sources of electric charges within the tissues,
the total current $J_{L}(x,t)+J_{O}(x,t)$ is divergence-free%
\begin{equation}
\nabla\cdot(J_{L}+J_{O})=0.\nonumber
\end{equation}
Thus%
\begin{equation}
\nabla\cdot\sigma\nabla u=-\nabla\cdot\left(  \sigma B\times V\right)
.\label{E:inhomo}%
\end{equation}
Since there are no currents through the boundary, the normal component of the
total current $J_{L}(x,t)+J_{O}(x,t)$ vanishes:
\begin{equation}
\frac{\partial}{\partial n}u(z)=-(B\times V(z))\cdot n(z),\qquad z\in
\partial\Omega,\label{E:inhomobc}%
\end{equation}
where $n(z)$ is the exterior normal to $\partial\Omega$ at point $z$.

We will assume that the boundary values of the potential $u(z,t)$ can be
measured at all points $z$ lying on $\partial\Omega$. More precisely, we will
model the measurements by integrating the boundary values with a weight $I(z)$
and thus forming measuring functional $M$ defined by the formula%
\begin{equation}
M(t)=\int\limits_{\partial\Omega}I(z)u(z,t)dA(z), \label{E:funcdef}%
\end{equation}
where $A(z)$ is the standard area element. Weight $I(z)$ can be a function or
a distribution, subject to the restriction
\begin{equation}
\int\limits_{\partial\Omega}I(z)dA(z)=0.\nonumber
\end{equation}
In particular, if one chooses to use $I(z)=\delta(z-a)-\delta(z-b)$, where
$\delta(\cdot)$ is the 2D Dirac delta-function, then $M$ models the two-point
measuring scheme utilized in~\cite{HaiderXu} and~\cite{Roth}.

In order to understand what kind of information is encoded in the values of
$M(t)$ let us consider solution $w_{I}(x)$ of the following divergence
equation
\begin{align}
\nabla\cdot\sigma\nabla w_{I}(x) &  =0,\label{E:homodiv}\\
\frac{\partial}{\partial n}w_{I}(z) &  =I\left(  z\right)  ,\qquad
z\in\partial\Omega.\label{E:homobc}%
\end{align}
(To ensure the uniqueness of the solution of the above boundary value problem
we will require that the integral of $w_{I}(z)$ over $\Omega$ vanishes.) Then
$w_{I}(x)$ equals the electric potential that would be induced in the tissues
by injecting currents $I\left(  z\right)  $ at the boundary $\partial\Omega$.
Let us denote the corresponding currents $\sigma\nabla w_{I}(x)$ by $J_{I}%
(x)$:%
\begin{equation}
J_{I}(x)=\sigma\nabla w_{I}(x).\label{E:current}%
\end{equation}

Let us now apply the second Green's identity to functions $u(x,t)$, $w_{I}%
(x)$, and $\sigma(x)$:%
\begin{equation}
\int\limits_{\Omega}[w_{J}\nabla\cdot(\sigma\nabla u)-u\nabla\cdot
(\sigma\nabla w_{J})]dx=\int\limits_{\partial\Omega}\sigma\left[  w_{J}%
\frac{\partial}{\partial n}u-u\frac{\partial}{\partial n}w_{J}\right]  dA(z).
\label{E:green}%
\end{equation}
By taking into account~(\ref{E:inhomobc}),~(\ref{E:inhomo}),~(\ref{E:homodiv}%
), and~(\ref{E:homobc}), equation~(\ref{E:green}) can be simplified to%
\begin{equation}
-\int\limits_{\Omega}w_{J}\nabla\cdot\left(  \sigma B\times V\right)
dx=\int\limits_{\partial\Omega}\sigma w_{J}\frac{\partial}{\partial
n}udA(z)-M(t).\nonumber
\end{equation}
Further, by integrating the left hand side of the last equation by parts, and
by replacing $\frac{\partial}{\partial n}u$ with expression~(\ref{E:inhomobc})
we obtain%
\begin{equation}
\int\limits_{\Omega}\sigma\nabla w_{J}\cdot\left(  B\times V\right)
dx-\int\limits_{\partial\Omega}w_{J}\sigma(B\times V)\cdot ndA(z)=-\int%
\limits_{\partial\Omega}\sigma w_{J}(B\times V)\cdot ndA(z)+M(t)\nonumber
\end{equation}
or%
\begin{equation}
M(t)=-\int\limits_{\Omega}\sigma\nabla w_{J}\cdot\left(  B\times V\right)
dx=-\int\limits_{\Omega}J_{I}\cdot\left(  B\times V\right)  dx=B\cdot
\int\limits_{\Omega}J_{I}(x)\times V(x,t)dx \label{E:vcrossb}%
\end{equation}
This equation generalizes equation (1) in~\cite{HaiderXu} obtained for the
particular case $I(z)=\delta(z-a)-\delta(z-b)$.

It is clear from equation~(\ref{E:vcrossb}) that MAET measurements recover
some information about currents $J_{I}(x)$. In order to gain further insight
let us assume that the acoustical properties of the medium, such as speed of
sound $c$ and density $\rho,$ are approximately constant within $\Omega$.
(Such approximation usually holds in breast imaging which is one of the most
important potential applications of this and similar modalities). Then the
acoustic pressure $p(x,t)$ within $\Omega$ satisfies the wave equation%
\begin{equation}
\frac{1}{c^{2}}\frac{\partial^{2}}{\partial t^{2}}p(x,t)=\Delta
p(x,t).\nonumber
\end{equation}
Additionally, $p(x,t)$ is the time derivative of the velocity potential
$\varphi(x,t)$ (see, for example~\cite{Colton}), so that
\begin{align}
V(x,t) &  =\frac{1}{\rho}\nabla\varphi(x,t),\label{E:velpoten}\\
p(x,t) &  =\frac{\partial}{\partial t}\varphi(x,t).\nonumber
\end{align}
Velocity potential $\varphi(x,t)$ also satisfies the wave equation%
\begin{equation}
\frac{1}{c^{2}}\frac{\partial^{2}}{\partial t^{2}}\varphi(x,t)=\Delta
\varphi(x,t).\nonumber
\end{equation}
Now, by taking into account~(\ref{E:velpoten}), equation~(\ref{E:vcrossb}) can
be re-written as%
\begin{equation}
M(t)=\frac{1}{\rho}B\cdot\int\limits_{\Omega}J_{I}(x)\times\nabla
\varphi(x,t)dx\nonumber
\end{equation}
Further, by noticing that%
\begin{equation}
\nabla\times\left(  \varphi J_{I}\right)  =-J_{I}\times\nabla\varphi
+\varphi\nabla\times J_{I}\nonumber
\end{equation}
we obtain%
\begin{align}
M(t) &  =\frac{1}{\rho}B\cdot\left[  -\int\limits_{\Omega}\nabla\times\left(
\varphi J_{I}\right)  dx+\int\limits_{\Omega}\varphi(x,t)\nabla\times
J_{I}(x)dx\right]  \nonumber\\
&  =\frac{1}{\rho}B\cdot\left[  \int\limits_{\partial\Omega}\varphi
(z,t)J_{I}(z)\times n(z)dA(z)+\int\limits_{\Omega}\varphi(x,t)\nabla\times
J_{I}(x)dx\right]  .\label{E:surfterm}%
\end{align}

In some situations the above equations can be further simplified. For example,
if at some moment of time $t$ velocity potential $\varphi(x,t)$ vanishes on
the boundary $\partial\Omega$, then the surface integral in~(\ref{E:surfterm})
also vanishes:%
\begin{equation}
M(t)=\frac{1}{\rho}B\cdot\int\limits_{\Omega}\varphi(x,t)\nabla\times
J_{I}(x)dx. \label{E:model}%
\end{equation}
Similarly, if boundary $\partial\Omega$ is located far away from the support
of inhomogeneity of $\sigma(x)$, the surface integral in~(\ref{E:surfterm})
can be neglected, and we again obtain equation~(\ref{E:model}).

Equation~(\ref{E:surfterm}) is our mathematical model of the MAET
measurements. Our goal is to reconstruct from measurements $M(t)$ conductivity
$\sigma(x)$ --- by varying, if necessary, $B$, $\varphi(x,t)$, and $I(z)$.

Our strategy for solving this problem is outlined in the following sections.
However, some conclusions can be reached just by looking at the equation
(\ref{E:surfterm}). For example, one can notice that if three sets of
measurements are conducted with magnetic induction pointing respectively in
the directions of canonical basis vectors $e_{1}$, $e_{2}$, and $e_{3}$, one
can easily reconstruct the sum of integrals in the brackets
in~(\ref{E:surfterm}). Further, if one focuses $\varphi(x,t)$ so that at the
moment $t=0$ it becomes the Dirac $\delta$-function centered at $y$, i.e.
\begin{equation}
\varphi(x,0)=\delta(x-y)\nonumber
\end{equation}
then one immediately obtains the value of $\nabla\times J_{I}$ at the point
$y$ (such a focusing is theoretically possible as explained in the next
section). Thus, by moving the focusing point through the object, one can
reconstruct the curl of $J_{I}$ in all of $\Omega$.

Our model also explains the observation reported in~\cite{HaiderXu} that no
signal is obtained when the acoustic wavepacket is passing through the regions
of the constant $\sigma(x)$. In such regions current $J_{I}$ is a potential
vector field and, therefore, the integral in~(\ref{E:model}) vanishes.

Finally, it becomes clear that an accurate image reconstruction is impossible
if monochromatic acoustic waves of only a single frequency $k$ are used for
scanning, no matter how well they are focused. In this case the spatial
component $\psi$ of $\varphi(x,t)=\psi(x)\exp(ikt)$ is a solution of the
Helmholtz equation%
\begin{equation}
\Delta\psi+k^{2}\psi=0,\nonumber
\end{equation}
and, within $\Omega$ it can be approximated by the plane waves in the form
$\exp(i\lambda\cdot x)$ with $|\lambda|=k$. Let us assume for simplicity that
the electrical boundary is removed to infinity. Then, measuring $M(t)$ given
by equation~(\ref{E:model}) is equivalent to collecting values of the Fourier
transform of $\nabla\times J_{I}(x)$ corresponding to the wave vectors
$\lambda$ lying on the surface of the sphere $|\lambda|=k$ in the Fourier
domain. The spatial frequencies of function $\nabla\times J_{I}(x)$ with wave
vectors that do not lie on this sphere cannot be recovered.

\section{Solving the inverse problem of MAET}

The first step toward the reconstruction of the conductivity is to reconstruct
currents $J_{I}(x)$ corresponding to certain choices of $I(z)$. Let us assume
that all the measurements are repeated three times, with magnetic induction
$B$ pointing respectively in the directions of canonical basis vectors $e_{1}%
$, $e_{2}$, and $e_{3}$. Then, as mentioned above, if $\varphi(x,0)=\delta
(x-y)$, one readily recovers from the measurements $M(0)$ the curl of the
current at $y$, i.e. $\nabla\times J_{I}(y)$. Generating such a velocity
potential is possible at least theoretically. For example, if one
simultaneously propagates plane waves $\varphi_{\lambda}(x,t)=\exp
(i\lambda\cdot x-i|\lambda|t)$ with all possible wave vectors $\lambda$, the
combined velocity potential at the moment $t=0$ will add up to the Dirac
delta-function $\delta(x)$. Such an arrangement is unlikely to be suitable for
a practical implementation: firstly, the sources of sound would have to be
removed far from the object to produce a good approximation to plane waves
within the object. Secondly, the sources would have to completely surround the
object to irradiate it from all possible directions. Finally, all the sources
would have to be synchronized. A variation of this approach is to place small
point-like sources in the vicinity of the object. In this case, instead of
plane waves, spherical monochromatic waves or propagating spherical fronts
would be generated. These types of waves can also be focused into a
delta-function (some discussion of such focusing and a numerical example can
be found in~\cite{KuKuAET1}).

\subsection{Synthetic focusing}

However, a more practical approach is to utilize some realistic measuring
configuration (e.g. one consisting of one or several small sources scanning
the boundary sequentially), and then to synthesize algorithmically from the
realistic data the desired measurements that correspond to the delta-like
velocity potential. Such a \emph{synthetic focusing} was first introduced in
the context of hybrid methods in~\cite{KuKuAET,KuKuAET1,Oberw}. It was shown,
in applications to AET and to the acoustically modulated optical tomography,
that such a synthetic focusing is equivalent to solving the inverse problem of
TAT. The latter problem has been studied extensively, and a wide variety of
methods is known by now (we will refer the reader to
reviews~\cite{KuKuRev,WangCRC} and references therein). The same technique can
be applied to MAET, as explained below.

\subsubsection{Measuring functionals solve the wave equation}

Let us consider a spherical propagating front originated at the point $y$. If
the initial conditions on the pressure $p_{y}(x,t)$ are%
\begin{equation}
\left\{
\begin{array}
[c]{c}%
p_{y}(x,0)=\delta(x-y),\\
\frac{\partial}{\partial t}p_{y}(x,0)=0,
\end{array}
\right. \nonumber
\end{equation}
then $p_{y}(x,t)$ can be represented in the whole of $\mathbb{R}^{3}$ by means
of the Kirchhoff formula~\cite{Vladimirov}
\begin{equation}
p_{y}(x,t)=\frac{\partial}{\partial t}\frac{\delta(|x-y|-ct)}{4\pi
|x-y|}.\nonumber
\end{equation}
Such a front can be generated by a small transducer placed at $y$ and excited
by a delta-like electric pulse; such devices are common in ultrasonic imaging.

Velocity potential $\varphi(x,y,t)$ corresponding to $p_{y}(x,t)$ then equals%
\begin{equation}
\varphi(x,y,t)=\frac{\delta(|x-y|-ct)}{4\pi|x-y|}.\label{E:acfront}%
\end{equation}
The role of variables $x$ and $y$ is clearly interchangeable; $\varphi(x,y,t)$
is the retarded Green's function of the wave equation~\cite{Vladimirov} either
in $x$ and $t$, or in $y$ and $t$. Moreover, consider the following
convolution $H(y,t)$ of a finitely supported smooth function $h(y)$ with
$\varphi$%

\begin{equation}
H(y,t)=\int\limits_{\mathbb{R}^{3}}h(y)\frac{\delta(|x-y|-ct)}{4\pi
|x-y|}dx.\nonumber
\end{equation}
Then $H(y,t)$ is the solution of the following initial value problem (IVP)\ in
$\mathbb{R}^{3}$ \cite{Vladimirov}:
\begin{equation}
\left\{
\begin{array}
[c]{c}%
\frac{1}{c^{2}}\frac{\partial^{2}}{\partial t^{2}}H(y,t)=\Delta_{y}H(y,t)\\
H(y,0)=0,\\
\frac{\partial}{\partial t}H(y,0)=h(y).
\end{array}
\right.  \label{E:wavesyst}%
\end{equation}

Suppose now that a set of MAET measurements is obtained with propagating wave
fronts $\varphi(x,y,t)$ with different centers $y$ (while $I(z)$ and $B$ are
kept fixed). By substituting (\ref{E:acfront}) into (\ref{E:surfterm}) we find
that, for each $y,$ the corresponding measuring functional $M_{I,B}(y,t)$ can
be represented as the sum of two terms:
\begin{equation}
M_{I,B}(y,t)=M_{I,B}^{\mathrm{sing}}(y,t)+M_{I,B}^{\mathrm{reg}}%
(y,t),\nonumber
\end{equation}
where%
\begin{align}
M_{I,B}^{\mathrm{sing}}(y,t) &  =\frac{1}{\rho}\int\limits_{\partial\Omega
}\frac{\delta(|z-y|-ct)}{4\pi|z-y|}B\cdot J_{I}(z)\times
n(z)dA(z),\label{E:realmesing}\\
M_{I,B}^{\mathrm{reg}}(y,t) &  =\frac{1}{\rho}\int\limits_{\Omega}\frac
{\delta(|x-y|-ct)}{4\pi|x-y|}B\cdot\nabla\times J_{I}(x)dx.\label{E:realmereg}%
\end{align}
It is clear from the above discussion that both terms $M_{I,B}^{\mathrm{sing}%
}(y,t)$ and $M_{I,B}^{\mathrm{reg}}(y,t)$ solve the wave equation in
$\mathbb{R}^{3}$, subject to the initial conditions
\begin{align}
\frac{\partial}{\partial t}M_{I,B}^{\mathrm{sing}}(x,0) &  =\frac{1}{\rho
}B\cdot J_{I}(x)\times n(x)\delta_{\partial\Omega}(x),\label{E:initdersing}\\
\frac{\partial}{\partial t}M_{I,B}^{\mathrm{reg}}(x,0) &  =\frac{1}{\rho
}B\cdot\nabla\times J_{I}(x),\label{E:initderreg}\\
M_{I,B}^{\mathrm{sing}}(y,t) &  =M_{I,B}^{\mathrm{reg}}%
(y,t)=0,\label{E:initfun}%
\end{align}
where $\delta_{\partial\Omega}(x)$ is the delta-function supported on
$\partial\Omega$. While singular term $M_{I,B}^{\mathrm{sing}}(y,t)$ solves
the wave equation in the sense of distributions, the regular term
$M_{I,B}^{\mathrm{reg}}(y,t)$ represents a classical solution of the wave equation.

\bigskip

\begin{proposition}
Suppose conductivity $\sigma(x)$ and boundary currents $I(z)$ are $C^{\infty}$
functions of their arguments, and the boundary $\partial\Omega$ is infinitely
smooth. Then the regular part $M_{I,B}^{\mathrm{reg}}(y,t)$ of the measuring
functional is a $C^{\infty}$ solution of the wave equation%
\begin{equation}
\frac{1}{c^{2}}\frac{\partial^{2}}{\partial t^{2}}M_{I,B}^{\mathrm{reg}%
}(y,t)=\Delta_{y}M_{I,B}^{\mathrm{reg}}(y,t),\quad y\in\mathbb{R}^{3},\quad
t\in\lbrack0,\infty),\label{E:waveeqM}%
\end{equation}
satisfying initial conditions (\ref{E:initderreg}) and (\ref{E:initfun}).

\begin{proof}
Under the above conditions, potential $w_{I}(x)$ solving the boundary value
problem (\ref{E:homodiv}), (\ref{E:homobc}) is a $C^{\infty}$ function in
$\Omega$ due to the classical estimates on the smoothness of solutions of
elliptic equations with smooth coefficients \cite{Gilbarg}. Therefore, the
right hand side of (\ref{E:initderreg}) can be extended by zero to a
$C^{\infty}$ function in $\mathbb{R}^{3}.$ Term $M_{I,B}^{\mathrm{reg}}(y,t)$
defined by equation (\ref{E:realmereg}) solves wave equation (\ref{E:waveeqM})
(due to the Kirchhoff formula, see \cite{Vladimirov}) subject to infinitely
smooth initial conditions (\ref{E:initderreg}), (\ref{E:initfun}), and thus it
is a $C^{\infty}$ function for all $y\in\mathbb{R}^{3},\quad t\in
\lbrack0,\infty).$
\end{proof}
\end{proposition}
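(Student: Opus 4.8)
The plan is to recognize $M_{I,B}^{\mathrm{reg}}(y,t)$ from equation~(\ref{E:realmereg}) as the retarded potential generated by the source $h(x)=\frac{1}{\rho}B\cdot\nabla\times J_{I}(x)$, and then to invoke the Kirchhoff representation already recorded in~(\ref{E:wavesyst}). That representation states precisely that the convolution of a finitely supported smooth function $h$ with the kernel $\delta(|x-y|-ct)/(4\pi|x-y|)$ solves the wave equation~(\ref{E:waveeqM}) in $\mathbb{R}^{3}$ with zero initial value and initial time-derivative equal to $h(y)$, and is $C^{\infty}$ in $(y,t)$. Hence, once I verify that $h$ is smooth and compactly supported inside $\Omega$, the wave equation, the initial conditions~(\ref{E:initderreg}) and~(\ref{E:initfun}), and the claimed $C^{\infty}$ regularity all follow at once.

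The core of the argument is therefore the regularity and the support of $h$. First I would appeal to interior elliptic regularity: since $\sigma$ is smooth and bounded away from $0$, and $I(z)$ and $\partial\Omega$ are smooth, the solution $w_{I}$ of~(\ref{E:homodiv})--(\ref{E:homobc}) belongs to $C^{\infty}(\Omega)$, and so does the current $J_{I}=\sigma\nabla w_{I}$ from~(\ref{E:current}). Next I would compute the curl via the product rule,
\begin{equation}
\nabla\times J_{I}=\nabla\times(\sigma\nabla w_{I})=\nabla\sigma\times\nabla w_{I}+\sigma\,\nabla\times\nabla w_{I}=\nabla\sigma\times\nabla w_{I},\nonumber
\end{equation}
since the curl of a gradient vanishes. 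This identity is the crux: it shows that $\nabla\times J_{I}$ is supported exactly where $\sigma$ is non-constant.

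Because $\sigma\equiv 1$ outside $\Omega_{1}$ and $\overline{\Omega_{1}}$ lies at positive distance from $\partial\Omega$, the factor $\nabla\sigma$ vanishes in a neighborhood of $\partial\Omega$ and throughout $\mathbb{R}^{3}\setminus\Omega_{1}$. Consequently $h=\frac{1}{\rho}B\cdot(\nabla\sigma\times\nabla w_{I})$ is supported in the compact set $\overline{\Omega_{1}}\subset\Omega$, where $w_{I}$ is already known to be smooth; therefore its extension by zero is a genuinely $C^{\infty}$ function on all of $\mathbb{R}^{3}$. With $h$ thus established as a smooth, finitely supported source, the last step is simply to quote the initial value problem~(\ref{E:wavesyst}) with this $h$ and read off the conclusion.

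I expect the only genuinely delicate point to be the smoothness of the zero-extension of $h$. Were the curl merely supported up to the boundary, one would have to control whether $w_{I}$ (and hence $J_{I}$) is smooth all the way to $\partial\Omega$ and whether the extension manufactures a singularity there. The hypothesis that $\sigma$ is constant near $\partial\Omega$ circumvents this entirely: the vanishing of $\nabla\sigma$ near the boundary forces $h$ to vanish there, so interior elliptic regularity on $\Omega$ is all that is required, and no boundary regularity theory need enter the argument.
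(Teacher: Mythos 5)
Your proof is correct and follows essentially the same route as the paper: interior elliptic regularity for $w_{I}$, smooth zero-extension of the initial data $\frac{1}{\rho}B\cdot\nabla\times J_{I}$, and the Kirchhoff representation~(\ref{E:wavesyst}) to conclude. In fact, your identity $\nabla\times J_{I}=\nabla\sigma\times\nabla w_{I}$, combined with $\sigma\equiv1$ outside $\Omega_{1}$, supplies an explicit justification for the zero-extension step that the paper's proof asserts without argument (though the paper states the underlying fact elsewhere), so your write-up is, if anything, slightly more complete.
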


\subsubsection{Reconstructing the initial conditions}

We would like to reconstruct the right hand side of~(\ref{E:initderreg}) (and,
possibly that of ~(\ref{E:initdersing})) from the measured values of
$M_{I,B}(y,t)$. Since $c$ is assumed constant, in the 3D case  $M_{I,B}(y,t)$
will vanish (due to the Huygens principle) for $t>t_{\max}=D_{\max}/c$ where
$D_{\max}$ is the maximal distance between the points of $\Omega$ and the
acoustic sources. We will assume that $M_{I,B}(y,t)$ is measured for all
$t\in\lbrack0,t_{\max}]$.

The problem of reconstructing $\frac{\partial}{\partial t}M_{I,B}(x,0)$ from
$M_{I,B}(y,t)$ is equivalent to that of reconstructing the initial value
$h(y)$ from the solution of IVP~(\ref{E:wavesyst}). The latter, more general
problem, has been studied extensively in the context of TAT (see, e.g.
\cite{KuKuRev,WangCRC} and references therein). In the case of TAT, points $y$
describe the location of detectors rather than sources, but the rest of the
mathematics remains the same. The most studied situation is when the detectors
are placed on a closed surface $\Sigma$ surrounding the object. If $\Sigma$ is
a sphere, a variety of inversion techniques is known, including (but not
limited to) the explicit inversion formulae~\cite{FPR,Kunyansky,Nguyen},
series techniques \cite{Kun-series,Kun-cyl,Norton2,MXW1}, time reversal by
means of finite differences~\cite{AmbPatch,burg-exac-appro,HKN}, etc. If
$\Sigma$ is a surface of a cube, one can use the inversion
formula~\cite{Kun-cube}, the already mentioned time reversal methods, or the
fast algorithm developed in \cite{Kun-series} for such surfaces.

The choice of the TAT inversion method for application in MAET will depend, in
particular, on the mutual location of the electric boundary $\partial\Omega$
and the acoustic source surface $\Sigma$. One can decide to move the electric
boundary further away by placing the object in a liquid with conductivity
equal to 1 and by submerging the acoustic sources into the liquid, in which
case $\Sigma$ will be inside $\partial\Omega$. Alternatively, one can move the
acoustic surface further away so that it surrounds the electric boundary. And,
finally, one may choose to conduct the electrical measurements on the surface
of the body, and to place the acoustic sources on the same surface, in which
case $\Sigma$ will coincide with $\partial\Omega$.

If $\partial\Omega$ lies inside $\Sigma$, all the above mentioned TAT
inversion techniques (theoretically) will reconstruct both $M_{I,B}%
^{\mathrm{sing}}(y,0)$ and $M_{I,B}^{\mathrm{reg}}(y,0)$. In practice,
accurate numerical reconstruction of the singular term $M_{I,B}^{\mathrm{sing}%
}(y,0)$ supported on $\partial\Omega$ may be difficult to obtain due to the
finite resolution of any realistic measurement system. Luckily, the support of
$M_{I,B}^{\mathrm{reg}}(y,0)$ ($\Omega_{1}$ in our notation) is spatially
separated from $\partial\Omega,$ so that the contributions from the singular
term can be eliminated just by setting the reconstructed image to zero outside
$\Omega_{1}$. As explained further in the paper, $M_{I,B}^{\mathrm{reg}}(y,0)$
contains enough information to reconstruct the conductivity. On the other
hand, $M_{I,B}^{\mathrm{sing}}(y,0)$ does carry some useful information, which
can be recovered by a specialized reconstruction algorithm.

If $\Sigma$ lies inside $\partial\Omega$ (but $\Omega_{1}$ still lies inside
$\Sigma)$, not all TAT reconstruction techniques can be applied. Most
inversion formulae will produce an incorrect result in the presence of the
exterior sources (such as the term $M_{I,B}^{\mathrm{sing}}(y,0)$ supported on
$\partial\Omega$ and exterior with respect to the region enclosed by $\Sigma
)$. On the other hand, formula~\cite{Kun-cube}, series
methods~\cite{Kun-series} and all the time reversal techniques automatically
filter out the exterior sources and thus can be used to reconstruct
$M_{I,B}^{\mathrm{reg}}(y,0).$

The situation is more complicated if surfaces $\Sigma$ and $\partial\Omega$
coincide. However, those methods that are insensitive to sources located
outside $\partial\Omega$ are also insensitive to the sources located on
$\partial\Omega$,  particularly to those  represented by $M_{I,B}%
^{\mathrm{sing}}(y,0).$ These methods can be used to reconstruct
$M_{I,B}^{\mathrm{reg}}(y,0).$ In other words, the following proposition holds:

\begin{proposition}
If values of measuring functional $M_{I,B}(y,t)$ are known for all $y\in
\Sigma$ and $t\in\lbrack0,D_{\max}/c]$, the term $M_{I,B}^{\mathrm{reg}}(y,0)$
can be exactly reconstructed in $\Omega_{1}$ (by using one of the
above-mentioned TAT algorithms). Moreover, if the conditions of Proposition 1
are satisfied, the reconstruction is exact point-wise.
\end{proposition}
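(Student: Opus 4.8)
The plan is to show that Proposition 2 follows almost entirely from the preceding discussion, with the only genuine content being the bookkeeping of where sources live and which TAT methods ignore exterior sources. First I would fix the decomposition $M_{I,B}(y,t)=M_{I,B}^{\mathrm{sing}}(y,t)+M_{I,B}^{\mathrm{reg}}(y,t)$ and recall that both terms solve the wave equation in $\mathbb{R}^{3}$ with zero initial value and with initial time-derivatives given by~(\ref{E:initdersing}) and~(\ref{E:initderreg}); the singular term's source $\frac{1}{\rho}B\cdot J_{I}(x)\times n(x)\,\delta_{\partial\Omega}(x)$ is supported exactly on $\partial\Omega=\Sigma$, while the regular term's source $\frac{1}{\rho}B\cdot\nabla\times J_{I}(x)$ is supported in $\Omega_{1}$, which lies strictly inside $\Sigma$ by the standing hypothesis that $\mathrm{dist}(\Omega_{1},\partial\Omega)>0$. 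The data $M_{I,B}(y,t)$ on $\Sigma\times[0,D_{\max}/c]$ is then the boundary trace of the solution of a single IVP of the form~(\ref{E:wavesyst}) whose initial velocity is the sum of these two sources.

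Next I would invoke the reduction, already established earlier in the excerpt, that recovering the initial velocity $h(y)$ of IVP~(\ref{E:wavesyst}) from its trace on $\Sigma$ is precisely the inverse problem of TAT. The key structural point is that the two source pieces are \emph{spatially disjoint}: the regular source sits in the open interior $\Omega_{1}$ enclosed by $\Sigma$, the singular source sits on $\Sigma$ itself. I would then appeal to the cited TAT methods --- the cube inversion formula~\cite{Kun-cube}, the series method~\cite{Kun-series}, and the time-reversal schemes~\cite{AmbPatch,burg-exac-appro,HKN} --- which are known to reconstruct exactly the part of the source lying strictly inside $\Sigma$ and to annihilate any contribution from sources located on or outside $\Sigma$. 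Applying such a method to the measured data therefore returns $M_{I,B}^{\mathrm{reg}}(y,0)$ in $\Omega_{1}$ and discards $M_{I,B}^{\mathrm{sing}}(y,0)$, giving the first assertion. (For the geometry $\partial\Omega$ inside $\Sigma$ one argues even more directly, since then \emph{every} listed method reconstructs both pieces and the singular piece can be removed by cutting off outside $\Omega_{1}$.)

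For the sharper ``point-wise exact'' claim I would bring in Proposition 1: under the $C^{\infty}$ hypotheses on $\sigma$, $I$, and $\partial\Omega$, the regular source $\frac{1}{\rho}B\cdot\nabla\times J_{I}$ extends by zero to a $C^{\infty}$ function on $\mathbb{R}^{3}$ supported in $\Omega_{1}$, so $M_{I,B}^{\mathrm{reg}}$ is a classical $C^{\infty}$ solution of the wave equation. The exact TAT inversion of a smooth, interior, compactly supported initial datum then reproduces it without any smoothing or distributional ambiguity, which is exactly the statement that the reconstruction of $M_{I,B}^{\mathrm{reg}}(y,0)$ holds at every point rather than merely in an $L^{2}$ or distributional sense.

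The main obstacle, and the step I would treat most carefully, is justifying that the chosen TAT algorithm is genuinely blind to the on-boundary singular term while remaining exact on the interior term. The subtlety is twofold: the singular source is a distribution on $\Sigma$, sitting precisely on the measurement surface rather than strictly outside it, so one cannot simply quote an ``exterior source'' lemma verbatim; and the combined initial velocity is not smooth, so the inversion must be interpreted as a linear operator acting on the sum and one must argue by linearity that it maps the singular piece to something supported off $\Omega_{1}$ (or to zero) while acting exactly on the regular piece. I would resolve this by appealing to the filtering property of the time-reversal and series/cube methods established in the cited references --- namely that their reconstruction operator, applied on $\Sigma$, recovers only the sources interior to the enclosed region and suppresses those on or beyond $\Sigma$ --- and then restricting the output to $\Omega_{1}$, where only the smooth regular term contributes, to obtain the point-wise identity.
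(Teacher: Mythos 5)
Your proposal is correct and follows essentially the same route as the paper, whose ``proof'' of this proposition is precisely the informal discussion preceding it: the decomposition into $M_{I,B}^{\mathrm{sing}}$ and $M_{I,B}^{\mathrm{reg}}$ with spatially separated sources, the identification with the TAT inverse problem for IVP~(\ref{E:wavesyst}), the appeal to the cited cube-formula/series/time-reversal methods' insensitivity to sources on or outside $\Sigma$ (with the case $\partial\Omega$ strictly inside $\Sigma$ handled by truncating outside $\Omega_{1}$), and Proposition~1 supplying the smoothness needed for point-wise exactness. The on-boundary subtlety you flag is handled in the paper at exactly the same level of rigor as in your write-up, namely by asserting the filtering property of the referenced algorithms rather than proving it.
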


\paragraph{Reconstructing the curl}

In order to reconstruct the curl of the current $J_{I}(x),$ we need to repeat
the procedure of finding $M_{I,B}^{\mathrm{reg}}(x,0)$ times, with three
different orientations of $B:$ $B^{(j)}=|B|e_{j}$, $j=1,2,3$. As a result, we
find the projections of $\nabla\times J_{I}(x)$ curl on $e_{1},$ $e_{2},$
$e_{3}$ and thus obtain:%
\begin{equation}
\nabla\times J_{I}(x)=\frac{\rho}{|B|}\sum_{j=1}^{3}e_{j}\frac{\partial
}{\partial t}M_{I,B^{(j)}}^{\mathrm{reg}}(x,0),\quad x\in\Omega_{1}%
.\label{E:curl}%
\end{equation}
(outside $\Omega_{1}$ the curl of $J_{I}(x)$ equals 0 since the conductivity
is constant there).

If, in addition, $\partial\Omega$ lies inside $\Sigma$ and $M_{I,B}%
^{\mathrm{sing}}(y,0)$ has been reconstructed, we obtain the term%
\begin{equation}
J_{I}(x)\times n(x)\delta_{\partial\Omega}(x)=\frac{\rho}{|B|}\sum_{j=1}%
^{3}e_{j}\frac{\partial}{\partial t}M_{I,B^{(j)}}^{\mathrm{sing}%
}(x,0).\label{E:surface}%
\end{equation}

\subsection{Reconstructing the currents}

The considerations of the previous section show how to recover from the values
of the measuring functionals $M_{I,B^{(j)}}(y,t)$ the curl of the current
$J_{I}$ and, in some situations, the surface term (\ref{E:surface}). The next
step is to reconstruct current $J_{I}$ itself.

\subsubsection{General situation}

Let us start with the most general situation and assume that only the curl
$C=\nabla\times J_{I}$ has been reconstructed. Since current $J_{I}$ is a
purely solenoidal field, there exists a vector potential $K(x)$ such that%
\begin{equation}
J_{I}(x)=\nabla\times K(x)+\Psi(x), \label{E:Hdec}%
\end{equation}
where $K(x)$ has the form
\begin{equation}
K(x)=\int\limits_{\Omega}\frac{C(y)}{4\pi(x-y)}dy,\nonumber
\end{equation}
and $\Psi(x)$ is both a solenoidal and potential field. Then there exists
harmonic $\psi(x)$ such that
\begin{equation}
\Psi(x)=\nabla\psi(x). \label{E:Hpot}%
\end{equation}
We know that
\begin{equation}
\left.  J_{I}\cdot n\right\vert _{\partial\Omega}=\left.  \frac{\partial
}{\partial n}w_{I}\right\vert _{\partial\Omega}=I. \label{E:BC}%
\end{equation}
Therefore, by combining equations~(\ref{E:Hdec}) and~(\ref{E:BC}) one obtains
\begin{equation}
\left.  \left(  n\cdot\left(  \nabla\times K\right)  (z)+\frac{\partial
}{\partial n}\psi(z)\right)  \right\vert _{\partial\Omega}=I(z),\nonumber
\end{equation}
and $\psi(z)$ now can be recovered, up to an arbitrary additive constant, by
solving the Neumann problem%
\begin{equation}
\left\{
\begin{array}
[c]{c}%
\Delta\psi(x)=0,\quad x\in\Omega\\
\frac{\partial}{\partial n}\psi(z)=I(z)-n\cdot\left(  \nabla\times
\int\limits_{\Omega}\frac{C(y)}{4\pi(z-y)}dy\right)  ,\quad z\in\partial
\Omega.
\end{array}
\right.  \label{E:Neumann}%
\end{equation}
Now $J_{I}(x)$ is uniquely defined by the formula%
\begin{equation}
J_{I}(x)=\nabla\times\int\limits_{\Omega}\frac{C(y)}{4\pi(x-y)}dy+\nabla
\psi(x),\quad x\in\Omega. \label{E:currentformula}%
\end{equation}

\begin{proposition}
Under smoothness assumption of Proposition~1 current $J_{I}(x)$ is given by
the formula~(\ref{E:currentformula}), where function $\psi(x)$ is the
(classical) solution of the Neumann problem~(\ref{E:Neumann}).
\end{proposition}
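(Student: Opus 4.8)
The statement consolidates the construction that precedes it, so to prove it rigorously I would verify three things in turn: that the Helmholtz-type splitting~(\ref{E:Hdec}) is legitimate, that the Neumann problem~(\ref{E:Neumann}) is well posed, and that under the smoothness hypotheses of Proposition~1 the resulting $\psi$ is a classical (indeed smooth) solution. The preliminary observation is that $C=\nabla\times J_I$ is a smooth, compactly supported field: since $\sigma\equiv1$ outside $\Omega_1$, there $J_I=\sigma\nabla w_I=\nabla w_I$ is a gradient and hence curl-free, so $\mathrm{supp}\,C\subset\Omega_1\subset\subset\Omega$; smoothness of $C$ follows from the smoothness of $w_I$ already established in the proof of Proposition~1.

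The heart of the matter is the identity $\nabla\times(\nabla\times K)=C$ for $K$ the componentwise Newtonian potential of $C$. Writing $\nabla\times(\nabla\times K)=\nabla(\nabla\cdot K)-\Delta K$ and using $\Delta K=-C$, it suffices to show $\nabla\cdot K=0$. This is where I would carry out the one genuine computation: moving $\nabla_x$ onto $\nabla_y$ in the kernel and integrating by parts, the boundary term vanishes because $C$ is compactly supported inside $\Omega$, leaving $\nabla\cdot K=\mathcal{N}[\nabla\cdot C]=0$ since $\nabla\cdot C=\nabla\cdot(\nabla\times J_I)=0$. Consequently $\Psi:=J_I-\nabla\times K$ is simultaneously curl-free (its curl is $C-C$) and divergence-free (recall $\nabla\cdot J_I=\nabla\cdot(\sigma\nabla w_I)=0$). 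Assuming $\Omega$ is simply connected, $\Psi=\nabla\psi$ for some scalar $\psi$, and solenoidality forces $\Delta\psi=\nabla\cdot\Psi=0$, giving~(\ref{E:Hpot}).

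It remains to pin down $\psi$ from the boundary data. Imposing $J_I\cdot n=I$ from~(\ref{E:BC}) on the decomposition yields exactly the Neumann condition in~(\ref{E:Neumann}). I would then check the solvability (compatibility) condition $\int_{\partial\Omega}[I-n\cdot(\nabla\times K)]\,dA=0$: the first term integrates to zero by the standing hypothesis on $I$, and the second vanishes by the divergence theorem since $\nabla\cdot(\nabla\times K)=0$. Standard elliptic theory then furnishes a solution unique up to an additive constant, and elliptic regularity up to the smooth boundary with smooth Neumann data (the trace $n\cdot(\nabla\times K)$ is smooth because $K$ is the Newtonian potential of a smooth compactly supported field) makes $\psi$ classical. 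Since only $\nabla\psi$ enters~(\ref{E:currentformula}), the constant ambiguity is harmless and $J_I$ is well defined.

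Finally I would confirm that the field produced by~(\ref{E:currentformula}) is genuinely the original current, i.e. that a solenoidal field on $\Omega$ is determined by its curl together with its normal trace: the difference of two such fields is curl-free, divergence-free, and has zero normal component, hence on a simply connected $\Omega$ with connected boundary it is the gradient of a harmonic function with vanishing Neumann data, therefore zero. The principal obstacles are thus not deep but must be handled with care: the vanishing of $\nabla\cdot K$, the compatibility of the Neumann data (which is precisely where $\int_{\partial\Omega}I\,dA=0$ is used), and the tacit topological assumption that $\Omega$ is simply connected, without which $\Psi$ need not be a gradient.
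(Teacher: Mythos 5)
Your proof is correct and follows essentially the same route as the paper, which states this proposition as a consolidation of the derivation immediately preceding it: the decomposition $J_{I}=\nabla\times K+\nabla\psi$ with $K$ the Newtonian potential of the curl $C$ and $\psi$ determined by the Neumann problem~(\ref{E:Neumann}). You additionally supply details the paper leaves implicit --- the verification that $\nabla\cdot K=0$ (so that $\nabla\times(\nabla\times K)=C$), the Neumann compatibility condition (which is exactly where $\int_{\partial\Omega}I\,dA=0$ enters), the elliptic regularity giving a classical $\psi$, and the tacit simple-connectedness of $\Omega$ needed for $\Psi=\nabla\psi$ --- all of which are accurate.
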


\subsubsection{Other possibilities}

If in addition to the curl $\nabla\times J_{I}(x)$, the surface term (equation
(\ref{E:surface})) has been reconstructed, there is no need to solve the
Neumann problem. Instead, function $\Psi(x)$ is given explicitly by the
following formula~\cite{Stewart}:%
\begin{equation}
\Psi(x)=\nabla_{x}\times\int\limits_{\partial\Omega}\frac{J_{I}(y)\times
n(y)}{4\pi(x-y)}dA(y).\nonumber
\end{equation}
The final expression for current $J_{I}$ can now be written as%
\begin{align}
J_{I}(x) &  =\nabla_{x}\times\left[  \int\limits_{\Omega}\frac{C(y)}%
{4\pi(x-y)}dy+\int\limits_{\partial\Omega}\frac{J_{I}(y)\times n(y)}%
{4\pi(x-y)}dA(y)\right]  \nonumber\\
&  =\nabla_{x}\times\int\limits_{\bar{\Omega}}\frac{C(y)+J_{I}(y)\times
n(y)\delta_{\partial\Omega}(y)}{4\pi(x-y)}dy,\quad x\in\Omega,\label{E:fancy}%
\end{align}
where $\bar{\Omega}$ is the closure $\Omega.$ The term with the delta-function
in the numerator of (\ref{E:fancy}) coincides with the surface term given by
equation (\ref{E:surface}). In order to avoid the direct numerical
reconstruction of the singular term, one may want to try to modify the
utilized TAT reconstruction algorithm so as to recover directly the
convolution with $\frac{1}{4\pi|x|}$ contained in equation (\ref{E:fancy}).
The practicality of such an approach requires further investigation.

Finally, in certain simple domains one can find a way to solve the equation%
\begin{equation}
C=\nabla\times J_{I}\nonumber
\end{equation}
for $J_{I}$ in such a way as to explicitly satisfy boundary conditions
(\ref{E:BC}) and thus to avoid the need of solving the Neumann problem
(\ref{E:Neumann}). One such domain is a cube; we present the corresponding
algorithm in Section~\ref{S:cube}.

\subsection{Reconstructing the conductivity}

In order to reconstruct the conductivity we will utilize three currents
$J^{(k)}$, $k=1,2,3$, corresponding to three different boundary conditions
$I_{k}$. As we mentioned before we are using three different magnetic
inductions $B^{(j)}$, $j=1,2,3$. As a result, we obtain the values of the
following measuring functionals $M_{I_{k},B^{(j)}}(y,t)$%
\begin{equation}
M_{I_{k},B^{(j)}}(y,t)=\int\limits_{\partial\Omega}I_{k}(z)u_{(j)}%
(z,t)dA(z),\quad j=1,2,3,\quad k=1,2,3, \label{E:goodfunc}%
\end{equation}
where $u_{(j)}(z,t)$ is the electric potential corresponding to the acoustic
wave with the velocity potential $\varphi(x,z,t)$ propagating through the body
in the presence of constant magnetic field $B^{(j)}$. Notice that the increase
in the number of currents $J^{(k)}$ does not require additional physical
measurements: the same measured boundary values of $u_{(j)}(z,t)$ are used to
compute different measuring functionals by changing the integration weight
$I_{k}(z)$ in equation (\ref{E:funcdef}).

For each of the currents $J^{(k)}$ we apply one of the above-mentioned TAT
reconstruction techniques to compute $\frac{\partial}{\partial t}%
M_{I_{k},B^{(j)}}(y,0)$, The knowledge of the latter functions for $B^{(j)}$,
$j=1,2,3$, allows us to recover the curls $C^{(k)}=\nabla\times J^{(k)}$,
$k=1,2,3$, (equation (\ref{E:curl})) and, possibly, the surface terms
(\ref{E:surface}). Finally, currents $J^{(k)}$ are reconstructed by one of the
methods described in the previous section.

At the first sight, finding $\sigma(x)$ from the knowledge of $J^{(k)}%
=\sigma\nabla w_{I_{k}}$, $k=1,2,3$, is a non-linear problem, since the
unknown electric potentials $w_{I_{k}}$ depend on $\sigma(x)$. However, as
shown below, this problem can be solved explicitly without a linearization or
some other approximation. Indeed, for any $k=1,2,3$, the following formula
holds:%
\begin{equation}
0=\nabla\times\frac{J^{(k)}}{\sigma}=\left(  \nabla\frac{1}{\sigma}\right)
\times J^{(k)}+\frac{1}{\sigma}C^{(k)}=-\frac{1}{\sigma^{2}}\left(
\nabla\sigma\right)  \times J^{(k)}+\frac{1}{\sigma}C^{(k)}\nonumber
\end{equation}
so that%
\begin{equation}
\nabla\ln\sigma(x)\times J^{(k)}(x)=C_{J}^{(k)}(x),\quad x\in\Omega,\quad
k=1,2,3.\nonumber
\end{equation}
Now one can try to find $\nabla\ln\sigma$ at each point in $\Omega$ by solving
the following (in general) over-determined system of linear equations:
\begin{equation}
\left\{
\begin{array}
[c]{c}%
\nabla\ln\sigma\times J^{(1)}=C^{(1)}\\
\nabla\ln\sigma\times J^{(2)}=C^{(2)}\\
\nabla\ln\sigma\times J^{(3)}=C^{(3)}%
\end{array}
\right.  .\label{E:linsys}%
\end{equation}
Let us assume first, that currents $J^{(l)}(x)$, $l=1,2,3$ form a basis in
$\mathbb{R}^{3}$ at each point in~$\Omega$. There are 9 equations in system
(\ref{E:linsys}), whose unknowns are the three components of $\nabla\ln\sigma
$, but the rank of the corresponding matrix does not exceed 6. In order to see
this, let us multiply each equation of (\ref{E:linsys}) by $J^{(l)},l=1,2,3$.
(Since the three currents $J^{(l)}$ form a basis, this is equivalent to a
multiplication by a non-singular matrix). We obtain%
\begin{equation}
\left\{
\begin{array}
[c]{c}%
\nabla\ln\sigma\cdot(J^{(1)}\times J^{(2)})=C^{(1)}\cdot J^{(2)}\\
\nabla\ln\sigma\cdot(J^{(1)}\times J^{(2)})=-C^{(2)}\cdot J^{(1)}\\
\nabla\ln\sigma\cdot(J^{(1)}\times J^{(3)})=C^{(1)}\cdot J^{(3)}\\
\nabla\ln\sigma\cdot(J^{(1)}\times J^{(3)})=-C^{(3)}\cdot J^{(1)}\\
\nabla\ln\sigma\cdot(J^{(2)}\times J^{(3)})=C^{(2)}\cdot J^{(3)}\\
\nabla\ln\sigma\cdot(J^{(2)}\times J^{(3)})=-C^{(3)}\cdot J^{(2)}%
\end{array}
\right.  .\nonumber
\end{equation}
In the case of perfect measurements the right hand sides of equations number
2, 4, and 6 in the above system would coincide with those of equations 1, 3,
and 5, respectively, and therefore the even-numbered equations could just be
dropped from the system. However, in the presence of noise it is better to
take the average of the equations with identical left sides, which is
equivalent to finding the least squares solution of this system. We thus
obtain:%
\begin{equation}
\left\{
\begin{array}
[c]{c}%
\nabla\ln\sigma\cdot(J^{(1)}\times J^{(2)})=\frac{1}{2}(C^{(1)}\cdot
J^{(2)}-C^{(2)}\cdot J^{(1)})\\
\nabla\ln\sigma\cdot(J^{(1)}\times J^{(3)})=\frac{1}{2}(C^{(1)}\cdot
J^{(3)}-C^{(3)}\cdot J^{(1)})\\
\nabla\ln\sigma\cdot(J^{(2)}\times J^{(3)})=\frac{1}{2}(C^{(2)}\cdot
J^{(3)}-C^{(3)}\cdot J^{(2)})
\end{array}
\right.  .\label{E:leastsq}%
\end{equation}
After some simple linear algebra transformations (see Appendix) the solution
of (\ref{E:leastsq}) can be written explicitly as follows:%
\begin{equation}
\nabla\ln\sigma=\frac{1}{2J^{(1)}\cdot(J^{(2)}\times J^{(3)})}M\left(
\begin{array}
[c]{c}%
C^{(2)}\cdot J^{(3)}-C^{(3)}\cdot J^{(2)}\\
-C^{(1)}\cdot J^{(3)}+C^{(3)}\cdot J^{(1)}\\
C^{(1)}\cdot J^{(2)}-C^{(2)}\cdot J^{(1)}%
\end{array}
\right)  ,\label{E:gradsol}%
\end{equation}
where $M$ is $(3\times3)$ matrix whose columns are the Cartesian coordinates
of the currents $J^{(l)},l=1,2,3$:%
\begin{equation}
M=\left(  J^{(1)}\rule[-10pt]{1pt}{26pt}J^{(2)}\rule[-10pt]{1pt}{26pt}%
J^{(3)}\right)  .\label{E:matrix}%
\end{equation}
Since, by assumption, currents $J^{(l)}$ form a basis at each point
of~$\Omega$, the denominator in (\ref{E:gradsol}) never vanishes and, thus,
equation (\ref{E:gradsol}) can be used to reconstruct $\nabla\ln\sigma$ in all
of~$\Omega$. Finally, we compute the divergence of both sides in
(\ref{E:gradsol}):%
\begin{equation}
\Delta\ln\sigma=\frac{1}{2}\nabla\cdot\left[  \frac{1}{J^{(1)}\cdot
(J^{(2)}\times J^{(3)})}M\left(
\begin{array}
[c]{c}%
C^{(2)}\cdot J^{(3)}-C^{(3)}\cdot J^{(2)}\\
-C^{(1)}\cdot J^{(3)}+C^{(3)}\cdot J^{(1)}\\
C^{(1)}\cdot J^{(2)}-C^{(2)}\cdot J^{(1)}%
\end{array}
\right)  \right]  ,\label{E:finalsys}%
\end{equation}
and solve the above Poisson equation for $\ln\sigma$ in $\Omega$ subject to
the Dirichlet boundary conditions
\begin{equation}
\left.  \ln\sigma\right\vert _{\partial\Omega}=0.\label{E:finalbc}%
\end{equation}

The above reconstruction procedure works if currents $J^{(1)}$, $J^{(2)}$, and
$J^{(3)}$ are linearly independent at each point in~$\Omega$. For an arbitrary
conductivity $\sigma$ this cannot be guaranteed. There exists a
counterexample~\cite{Cap} describing such a conductivity for which a boundary
condition can be found such that the corresponding current vanishes at a
certain point within the domain. Clearly, while such a situation can occur, it
is unlikely to occur for an arbitrary conductivity $\sigma$, and our method
should still be useful in practice.

Moreover, the condition of the three currents forming a basis at each point in
space can be relaxed. Below we show that if only one of the currents $J^{(1)}%
$, $J^{(2)}$, and $J^{(3)}$ vanishes (say, $J^{(3)}=0$) at some point and the
two other currents are not parallel, the following truncated system is still
uniquely solvable:%
\begin{equation}
\left\{
\begin{array}
[c]{c}%
\nabla\ln\sigma\times J^{(1)}=C^{(1)}\\
\nabla\ln\sigma\times J^{(2)}=C^{(2)}%
\end{array}
\right.  .\label{E:smallsys}%
\end{equation}
Indeed, let us multiply via dot product the above equations by $J^{(2)}$ and
$J^{(1)}$ respectively, and subtract them. We obtain%
\begin{equation}
\nabla\ln\sigma\cdot(J^{(1)}\times J^{(2)})=\frac{1}{2}\left(  C^{(1)}\cdot
J^{(2)}-C^{(2)}\cdot J^{(1)}\right)  .\label{E:new1}%
\end{equation}
Now, multiply the first equation in (\ref{E:smallsys}) by $J^{(1)}\times
J^{(2)}$. The left hand side will take the form%
\begin{align*}
\left(  \nabla\ln\sigma\times J^{(1)}\right)  \cdot(J^{(1)}\times J^{(2)}) &
=\nabla\ln\sigma\cdot\left[  J^{(1)}\times(J^{(1)}\times J^{(2)})\right]  \\
&  =\nabla\ln\sigma\cdot\left[  \left(  J^{(1)}\cdot J^{(2)}\right)
J^{(1)}-\left(  J^{(1)}\cdot J^{(1)}\right)  J^{(2)}\right]  ,
\end{align*}
which leads to the equation%
\begin{equation}
\nabla\ln\sigma\cdot\left[  \left(  J^{(1)}\cdot J^{(2)}\right)
J^{(1)}-\left(  J^{(1)}\cdot J^{(1)}\right)  J^{(2)}\right]  =C^{(1)}%
\cdot(J^{(1)}\times J^{(2)}).\label{E:new2}%
\end{equation}
Similarly, by multiplying the second equation in (\ref{E:smallsys}) by
$(J^{(2)}\times J^{(1)})$ we obtain%
\begin{equation}
\nabla\ln\sigma\cdot\left[  \left(  J^{(1)}\cdot J^{(2)}\right)
J^{(2)}-\left(  J^{(2)}\cdot J^{(2)}\right)  J^{(1)}\right]  =C^{(2)}%
\cdot(J^{(2)}\times J^{(1)}).\label{E:new3}%
\end{equation}
Equations (\ref{E:new1}), (\ref{E:new2}) and (\ref{E:new3}) form a linear
system with three equations and three unknowns. In other to show that the
matrix of this system is non-singular, it is enough to show that the vectors
given by the bracketed expressions in (\ref{E:new2}) and (\ref{E:new3}) are
not parallel. The cross-product of these terms yields%
\begin{equation}
\lbrack...]\times\lbrack...]=(J^{(1)}\times J^{(2)})\left[  \left(
J^{(1)}\cdot J^{(2)}\right)  ^{2}-\left(  J^{(1)}\cdot J^{(1)}\right)  \left(
J^{(2)}\cdot J^{(2)}\right)  \right]  .\nonumber
\end{equation}
The above expression is clearly non-zero if $J^{(1)}$ and $J^{(2)}$ are not
parallel, and therefore the system of the three equations (\ref{E:new1}),
(\ref{E:new2}), (\ref{E:new3}) is uniquely solvable in this case.

\begin{theorem}
Suppose that the conditions of Proposition 1 are satisfied, and that the
conductivity $\sigma(x)$ and boundary currents $I_{k},$ $k=1,2,3,$ are such
that at each point $x\in\Omega$ two of the three correspondent currents
$J^{(k)}$ are non-parallel. Then the logarithm of the conductivity $\ln\sigma$
is uniquely determined by the values of the measuring functionals
$M_{I_{k},B^{(j)}}(y,t),$ $k=1,2,3,$ $j=1,2,3,$ $y\in\Sigma$ and $t\in
\lbrack0,D_{\max}/c].$
\end{theorem}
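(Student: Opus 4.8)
The plan is to prove uniqueness by assembling the reconstruction chain built up in the preceding sections into a single deterministic map from the data to $\ln\sigma$: since every link produces a unique output from its input, the composite map shows that $\ln\sigma$ is determined by the measuring functionals. First I would invoke Proposition 2 to recover, for each pair $(k,j)$, the regular part $M_{I_k,B^{(j)}}^{\mathrm{reg}}(y,0)$ in $\Omega_1$ from the measured $M_{I_k,B^{(j)}}(y,t)$; Proposition 1 guarantees this function is $C^\infty$, so its time derivative $\frac{\partial}{\partial t}M_{I_k,B^{(j)}}^{\mathrm{reg}}(y,0)$ is well defined pointwise. Summing over the three orientations $B^{(j)}$ as in (\ref{E:curl}) then yields the three curls $C^{(k)}=\nabla\times J^{(k)}$ throughout $\Omega$ (extended by zero outside $\Omega_1$).

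The second link is Proposition 3: from each curl $C^{(k)}$ the current $J^{(k)}$ is reconstructed uniquely via (\ref{E:currentformula}), where the harmonic correction is the unique solution of the Neumann problem (\ref{E:Neumann}) under the prescribed normalization. Hence the data determine the triple $J^{(1)},J^{(2)},J^{(3)}$ in $\Omega$.

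The third and most delicate link is the pointwise recovery of $\nabla\ln\sigma$, and I expect this to be the crux of the argument. At a generic point the three currents form a basis and (\ref{E:gradsol}) applies directly, but the theorem only assumes that at each $x$ \emph{two} of the currents are non-parallel. The plan is to select, at each point, such a non-parallel pair—say $J^{(1)},J^{(2)}$—and use the reduced system (\ref{E:new1})--(\ref{E:new3}). The key verification is that this $3\times3$ system is non-singular under the mere non-parallelism hypothesis: the cross product of the two bracketed vectors equals $(J^{(1)}\times J^{(2)})[(J^{(1)}\cdot J^{(2)})^2-(J^{(1)}\cdot J^{(1)})(J^{(2)}\cdot J^{(2)})]$, which is nonzero precisely when $J^{(1)}$ and $J^{(2)}$ are not parallel, by Cauchy--Schwarz. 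Since the true gradient already solves the overdetermined relations $\nabla\ln\sigma\times J^{(k)}=C^{(k)}$, the unique solution of the reduced system coincides with $\nabla\ln\sigma(x)$; and because its right-hand sides involve only the already-reconstructed $C^{(k)},J^{(k)}$, the vector field $\nabla\ln\sigma$ is pinned down at every point of $\Omega$, independently of which valid pair is chosen.

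Finally, taking the divergence converts the pointwise gradient data into the Poisson equation (\ref{E:finalsys}) with known right-hand side, and the Dirichlet condition (\ref{E:finalbc})—legitimate because $\sigma\equiv1$ near $\partial\Omega$—makes this elliptic boundary value problem uniquely solvable by classical theory. Chaining the four uniqueness statements then establishes that $\ln\sigma$ is uniquely determined by the functionals $M_{I_k,B^{(j)}}(y,t)$. The subtlety to guard against is not the elliptic solvability but the pointwise linear algebra: I must confirm that the non-singularity of the reduced system holds uniformly from the non-parallelism assumption, and that switching the selected pair across different points is harmless because each admissible pair returns the same true gradient.
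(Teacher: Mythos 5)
Your proposal is correct and follows essentially the same route as the paper's own proof: chaining Propositions 1--3 (TAT recovery of the regular terms, curl formula~(\ref{E:curl}), current reconstruction via the Neumann problem), then solving the reduced system (\ref{E:new1})--(\ref{E:new3}) pointwise under the non-parallelism hypothesis, and finally the Poisson problem with zero Dirichlet data. The extra details you supply --- the Cauchy--Schwarz justification of non-singularity and the observation that any admissible pair of currents returns the same gradient --- are consistent with, and slightly more explicit than, the paper's argument.
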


\begin{proof}
By Propositions 1, 2, 3, and 4, from the values of $M_{I_{k},B^{(j)}}(y,t)$
one can reconstruct currents $J^{(k)}(x),$ $k=$ $1,2,3,$ at each point $x$ in
$\Omega.$ Since at each point at least two of the three currents (lets call
them $J^{(1)},$ and $J^{(2)}$) are not parallel, system of the three equations
(\ref{E:new1}), (\ref{E:new2}), (\ref{E:new3}) is uniquely solvable, and
$\nabla\ln\sigma$ can be found at each point in $\Omega.$ Since it was assumed
that $\sigma$ is bounded away from zero, $\nabla\ln\sigma$ is a $C^{\infty}$
function in $\Omega.$ By computing the divergence of $\nabla\ln\sigma$ the
problem of finding $\ln\sigma$ reduces to solving the Poisson problem with
zero Dirichlet boundary conditions in a smooth domain $\Omega.$
\end{proof}

The condition that out of the three currents two are non-zero and not
parallel, is less restrictive than the requirement that the three currents are
linearly independent. To the best of our knowledge, there exists no
counterexample showing that the former condition can be violated , i.e. that
one of the three currents generated by linearly independent boundary
conditions vanishes and the other two are parallel at some point in space. On
the other hand, we know of no proof that this cannot happen.

\section{The case of a rectangular domain}

\label{S:cube}
%========================================================

In the previous section we presented a theoretical scheme for finding the
currents $J^{(k)}$, $k=1,2,3$, and conductivity $\sigma(x)$ from the MAET
measurements in a rather general setting, where the electrical domain $\Omega$
and surface $\Sigma$ supporting the acoustic sources are quite arbitrary. This
scheme consists of several steps including the solution of the inverse problem
of TAT in the domain surrounded by $\Sigma$, solution of the Neumann problem
for the Laplace equation in $\Omega$, and solution of the Poisson equation in
$\Omega$ for $\ln\sigma$. All these problems are well-studied and various
algorithms for their numerical solution are well known. However, in the
simplest case when $\Omega$ is a cube and $\Sigma$ coincides with
$\partial\Omega$, the reconstruction can be obtained by means of an explicit
series solution as described below.

\subsection{Fast reconstruction algorithm}

Let us assume that the domain $\Omega$ is a cube $[0,1]\times\lbrack
0,1]\times\lbrack0,1]$, and the sound sources are located on $\partial\Omega$.
(In practice such a measuring configuration will occur if the object is placed
in a cubic tank filled with conductive liquid, and the sound sources and
electrical connections are placed on the tank walls). We will use three
boundary conditions $I_{k}$ defined by the formulae:%
\begin{equation}
I_{k}(x)=\left\{
\begin{array}
[c]{cc}%
\frac{1}{2}, & x\in\partial\Omega,\quad x_{k}=1\\
-\frac{1}{2}, & x\in\partial\Omega,\quad x_{k}=0\\
0, & x\in\partial\Omega,\quad0<x_{k}<1
\end{array}
\right.  ,\quad k=1,2,3,\quad x=(x_{1},x_{2},x_{3}). \label{E:currents}%
\end{equation}
As before, all the measurements are repeated with three different direction of
the magnetic field $B^{(j)}=|B|e_{j}$, $j=1,2,3$, and the values of
functionals $M_{I_{k},B^{(j)}}(y,t)$ (see equation (\ref{E:goodfunc})) are
computed from the measurements of the electrical potentials $u^{(j)}(x,t)$ on
$\partial\Omega$, for $t\in\left[  0,\frac{\sqrt{3}}{c}\right]  $.

We start the reconstruction by applying to $M_{I_{k},B^{(j)}}(y,t)$ the fast
cubic-domain TAT algorithm~\cite{Kun-series} to recover the regular terms
$M_{I^{(k)},B^{(j)}}^{\mathrm{reg}}(x,0),$ $j=1,2,3,$ $k=1,2,3.$ (The
algorithm we chose automatically sets to zero the sources corresponding to the
surface terms $\frac{\partial}{\partial t}M_{I^{(k)},B^{(j)}}^{\mathrm{sing}%
}(x,0)$ supported on $\Sigma)$. This is done for all three directions of
$B^{(j)}$, so that we immediately obtain the curls $C^{(k)}(x)=\nabla\times
J^{(k)}(x)$, $k=1,2,3$, $x\in\partial\Omega$.

Now, since the currents are divergence-free,%
\begin{equation}
\nabla\times C^{(k)}(x)=\nabla\times\nabla\times J^{(k)}(x)=-\Delta
J^{(k)}(x),\nonumber
\end{equation}
and we can try to solve the above equation as a set of Poisson problems for
the components $J^{(k)}(x)$ in $\Omega$. Below we discuss how to enforce the
correct boundary conditions for these Poisson problems.

Let us recall that%
\begin{equation}
J^{(k)}(x)=\sigma\nabla w^{(k)}(x),\nonumber
\end{equation}
where $w^{(k)}(x)$ is the corresponding electric potential. It is convenient
to subtract the linear component of potentials, i.e. to introduce potentials
$w^{(k),0}(x)$ and $J^{(k),0}(x)$ defined as follows:%
\begin{align*}
w^{(k)}(x) &  =w^{(k),0}(x)+x_{k},\\
J^{(k)}(x) &  =J^{(k),0}(x)+e_{k},\quad k=1,2,3.
\end{align*}
Now each $w^{(k),0}(x)$ satisfies zero Neumann conditions on $\partial\Omega$,
and thus can be extended by even reflections to a periodic function in
$\mathbb{R}^{3}$. Since $w^{(k),0}(x)$ is a harmonic function near
$\partial\Omega$, such an extension would be a harmonic function in the
neighborhood of $\partial\Omega$ and its reflections. Therefore, each
$w^{(k),0}(x)$ can be expanded in the Fourier cosine series in~$\Omega$, and
the derivatives of this series yield correct behavior of the so-computed
$\nabla w^{(k),0}(x)$ on $\partial\Omega$, and also of currents $J^{(k),0}(x)$
(the latter currents coincide with $\nabla w^{(k),0}(x)$ in the vicinity of
$\partial\Omega$). Therefore, the components of currents $J^{(k),0}(x)$ should
be expanded in the Fourier series whose basis functions are the corresponding
derivatives of the cosine series. In other words, the following series yield
correct boundary conditions when used as a basis for expanding currents
$J^{(k),0}(x)$, $k=1,2,3$:%

\begin{equation}
\left\{
\begin{array}
[c]{c}%
J_{1}^{(k),0}(x)=\sum\limits_{l=1}^{\infty}\sum\limits_{m=0}^{\infty}%
\sum\limits_{n=0}^{\infty}A_{l,m,n}^{(k),1}\sin\pi lx_{1}\cos\pi mx_{2}\cos\pi
nx_{3}\\
J_{2}^{(k),0}(x)=\sum\limits_{l=0}^{\infty}\sum\limits_{m=1}^{\infty}%
\sum\limits_{n=0}^{\infty}A_{l,m,n}^{(k),2}\cos\pi lx_{1}\sin\pi mx_{2}\cos\pi
nx_{3}\\
J_{3}^{(k),0}(x)=\sum\limits_{l=0}^{\infty}\sum\limits_{m=0}^{\infty}%
\sum\limits_{n=1}^{\infty}A_{l,m,n}^{(k),3}\sin\pi lx_{1}\cos\pi mx_{2}\cos\pi
nx_{3}%
\end{array}
\right.  ,\label{E:series}%
\end{equation}
\bigskip where $J^{(k),0}(x)=\left(  J_{1}^{(k),0}(x),J_{2}^{(k),0}%
(x),J_{3}^{(k),0}(x)\right)  $. Now, since $\nabla\times J^{(k),0}%
(x)=\nabla\times J^{(k)}(x)$:%
\begin{equation}
\Delta J^{(k),0}(x)=-\nabla\times C^{(k)}(x),\quad k=1,2,3\label{E:lapgrad}%
\end{equation}
and, if the Poisson problems~(\ref{E:lapgrad}) are solved for each component
of $J^{(k),0}$ using sine/cosine Fourier series (\ref{E:series}), the correct
boundary conditions will be attained. The computation can be performed
efficiently using the Fast Fourier sine and cosine transforms (FFST and FFCT).

One can notice that before Poisson problems (\ref{E:lapgrad}) can be solved,
the curl of the $C^{(k)}(x)$ needs to be computed. We compute it by expanding
$C^{(k)}(x)$ in the Fourier sine series and by differentiating the series,
again using FFST and FFCT. This computation amounts to numerical
differentiation of the data that may contain a significant amount of noise.
However, this does not give rise to instabilities, since this differentiation
is immediately followed by an inverse Laplacian (describing solution of the
Poisson problem), so that the combined operator is actually smoothing.

Finally, once the currents are reconstructed, we form the right hand side of
the system (\ref{E:finalsys}) and solve this Poisson problem for $\ln
\sigma(x)$ by using the Fourier sine series which yields the desired boundary
conditions (\ref{E:finalbc}). Again, FFST and FFCT are utilized here and in
the computations of the divergence needed to form equation (\ref{E:finalsys}).

\begin{figure}[t]
\begin{center}%
\begin{tabular}
[c]{cc}%
\includegraphics[width=2.3in,height=2.3in]{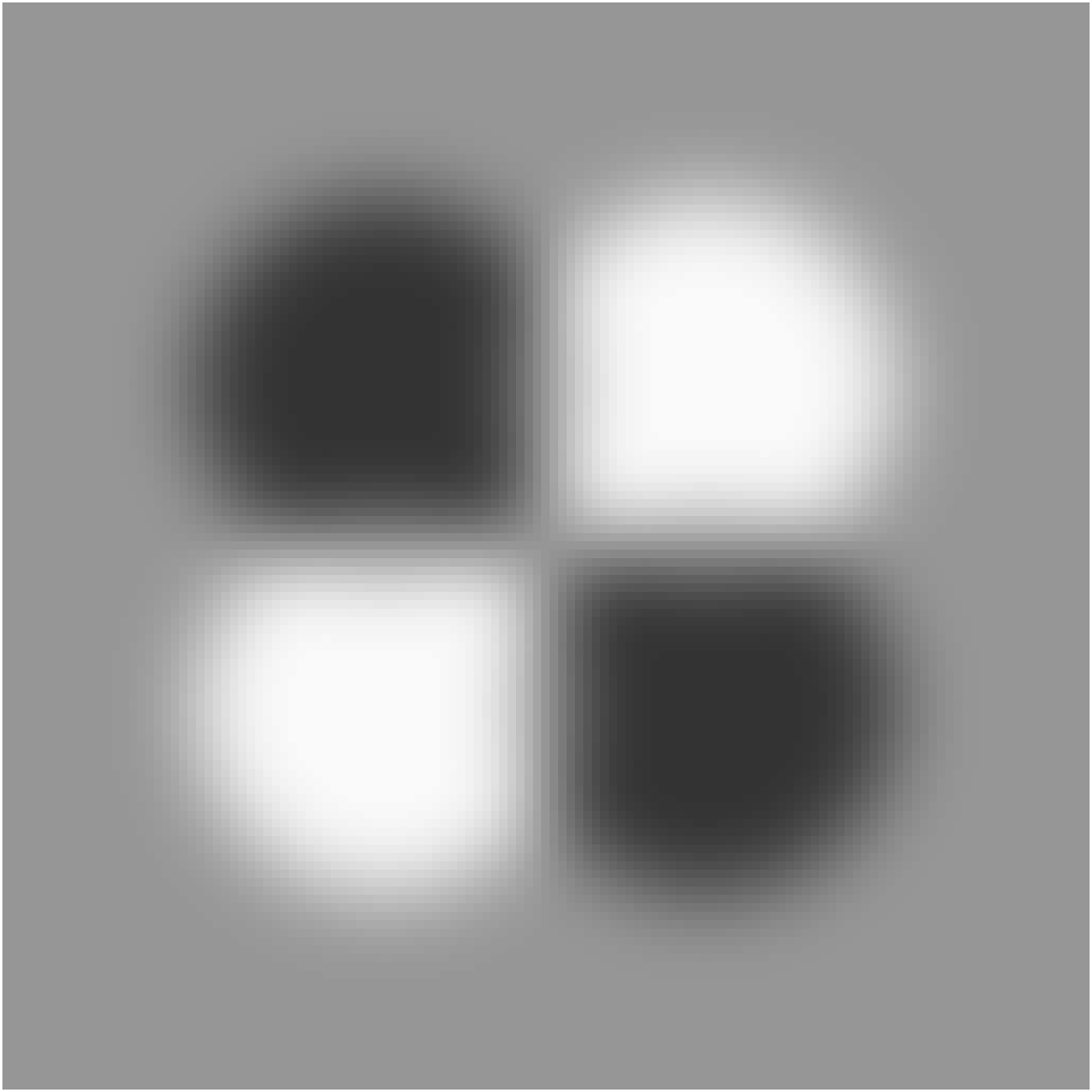} &
\includegraphics[width=2.3in,height=2.3in]{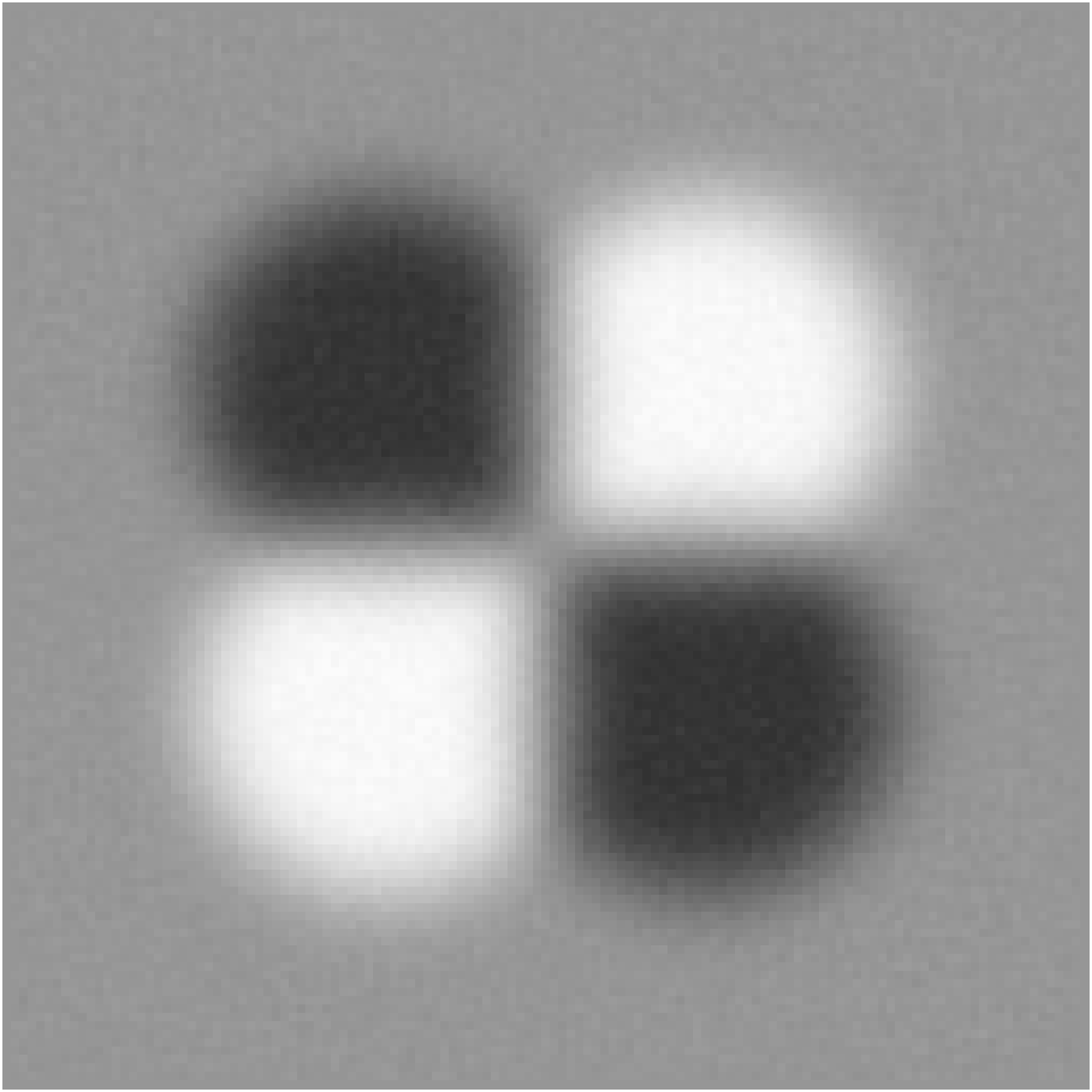}\\
(a) & (b)
\end{tabular}
\end{center}
\caption{A $3D$ simulation with a smooth phantom representing $\ln\sigma(x)$
\newline(a) the cross section of the phantom by the plane $x_{3}=0.5$
\newline(b) reconstruction from the data with added $50\%$ (in $L_{2}$ sense)
noise}%
\label{F:recsmoo}%
\end{figure}

\begin{figure}[t]
\begin{center}
\includegraphics[width=3.0in,height=1.4in]{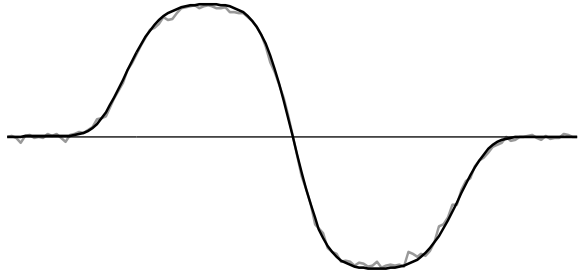}
\end{center}
\caption{The cross section of the reconstructed image (shown in
Figure~\ref{F:recsmoo}) by the line $x_{1}=0.25$, $x_{3}=0.5$. The thick black
line represents the phantom, the gray line corresponds to the reconstructed
image}%
\label{F:smoprof}%
\end{figure}

One could notice that all the steps of the present algorithm are explicit, and
can be performed using FFST and FFCT, so that the number of floating point
operations (flops)\ required to complete the computations is $O(n^{3}\ln n)$
for a $(n\times n\times n)$ computational grid. The same is true for the TAT
reconstruction technique~\cite{Kun-series} used on the first step of
computations, so that the whole method is fast; it has complexity of
$O(n^{3}\ln n)$ flops.

\subsection{Numerical results}

Since the author does not have at his disposal any real MAET\ measurements,
the work of the present reconstruction algorithm (for a cubic domain) will be
demonstrated on simulated data. The most thorough simulation would entail
solving equation (\ref{E:inhomo}) with the boundary conditions
(\ref{E:inhomobc}) for various velocity fields $V(x,t)$ corresponding to the
propagating spherical fronts originating at different locations on
$\partial\Omega$. For a good reconstruction, the number of the data points
should be comparable with the number of unknowns. We would like to reconstruct
the conductivity on a fine 3D grid (say, of the size $257\times257\times257$),
which implies having several millions of unknowns. Thus, we would need to
solve equation (\ref{E:inhomo}) several million times. This task is too
challenging computationally.

Instead, for a given phantom and for the set of functions $I^{(k)}$ given by
equations (\ref{E:currents}) we computed currents $J^{(k)}$ by solving
equation (\ref{E:homodiv}) with boundary conditions (\ref{E:homobc}). Next,
the wave equation (\ref{E:wavesyst}) with the initial condition $h(y)=B^{(j)}%
\cdot\nabla\times J^{(k)}(x)$ was solved for $j=1,2,3$, $k=1,2,3$; the values
of the solution of this equation at points in $\partial\Omega$ simulated the
regular part $\rho M_{I,B}^{\mathrm{reg}}(x,t)$ of the measuring functionals
$\rho M_{I,B}(y,t)$. Note that for simplicity we did not model the singular
term $M_{I,B}^{\mathrm{sing}}(x,t)$ given by (\ref{E:realmesing}).
Theoretically, when the TAT\ reconstruction algorithm~\cite{Kun-series} is
applied to such data, since this term is not supported in $\Omega$ it will not
contribute to the reconstruction, and so the additional effort to simulate it
would not be rewarded with additional insight. \begin{figure}[t]
\begin{center}
\includegraphics[width=3.0in,height=1.7in]{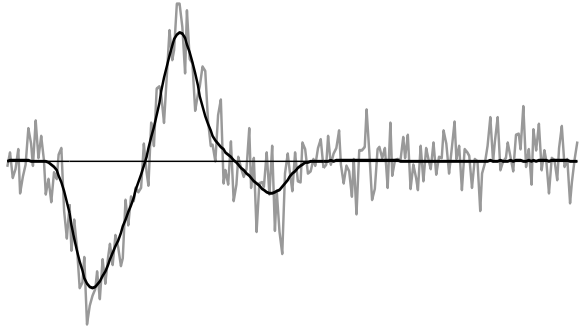}
\end{center}
\caption{Plot of one of the simulated measurement functionals for one of the
detector positions (see text for details): thick black line represenents
accurate values, gray line shows the data with added $50\%$ noise}%
\label{F:sigprof}%
\end{figure}

\begin{figure}[h]
\begin{center}
\includegraphics[width=3.0in,height=1.7in]{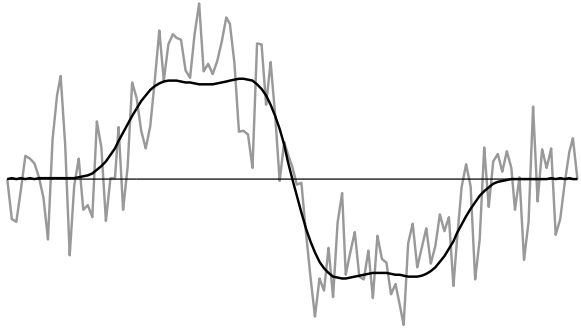}
\end{center}
\caption{ Profile of the reconstructed curl (gray line) compared to the
accurate values (thick black line). Shown are the values of the third
component of $C^{(1)}$ along the line $x_{2}=0.5$, $x_{3}=0.5$ }%
\label{F:crlprof}%
\end{figure}

\begin{figure}[t]
\begin{center}
\includegraphics[width=3.0in,height=2.6in]{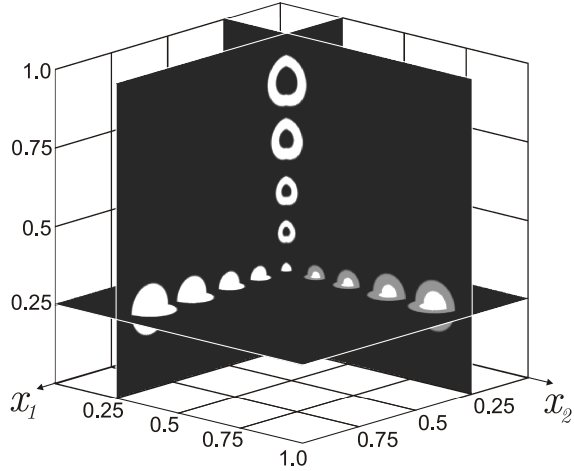}
\end{center}
\caption{The second, (almost) piece-wise constant $3D$ phantom. Shown are
cross sections by the planes $x_{j}=0.25$, $j=1,2,3$}%
\label{F:3Dphan}%
\end{figure}
As the first phantom simulating $\ln\sigma(x)$ we used the following linear
combinations of four $C^{8}$ radially symmetric functions $\varphi(x-x^{(i)})$
with centers $x^{(i)}$ lying in the plane$~x_{3}=0:$%
\begin{align*}
f(x)  &  =\sum_{i=1}^{4}a_{i}\varphi(|x-x^{(i)}|),\\
x^{(1)}  &  =(0.25,0.25,0),\quad a_{1}=0.5,\\
x^{(2)}  &  =(0.25,0.75,0),\quad a_{2}=-0.5,\\
x^{(3)}  &  =(0.75,0.25,0),\quad a_{3}=-0.5,\\
x^{(4)}  &  =(0.75,0.75,0),\quad a_{4}=0.5,
\end{align*}
\begin{figure}[h!]
\begin{center}
\includegraphics[width=4.0in,height=1.7in]{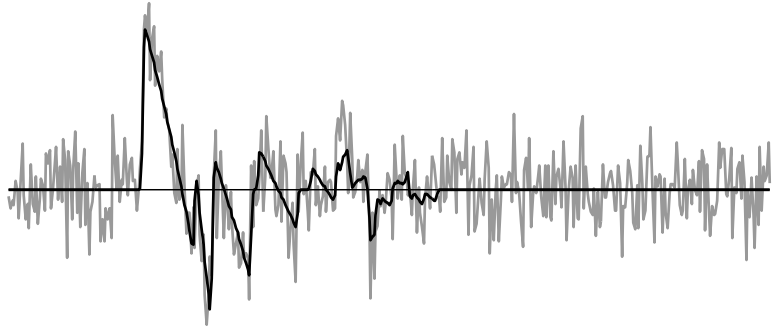}
\end{center}
\caption{Plot of the values of one of the simulated measurement functionals
for one of the detector positions (see text for details): thick black line
represenents accurate values, gray line shows the data with added $100\%$
noise}%
\label{F:noiseprof}%
\end{figure}
where $\varphi(t)$ is a decreasing non-negative trigonometric polynomial on
$[0,r_{0}]$, such that $\varphi(0)=1$, $\varphi(t)=0$ for $t\geq r_{0}$ and
the first eight derivatives of $\varphi$ vanish at $0$ and at $r_{0}$. Radius
$r_{0}$ was equal to 0.34 in this simulation. A gray scale picture of this
phantom is shown in Figure~\ref{F:recsmoo}(a). Figure~\ref{F:recsmoo}(b)
demonstrates the cross-section by the plane $x_{3}=0.5$ of the image
reconstructed on a $129\times129\times129$ computational grid from simulated
MAET data with added simulated noise. The acoustic sources were located at the
nodes of $129\times129$ Cartesian grids on each of the six faces of cubic
domain $\Omega$. For each source 223 values of each of the measuring
functionals were computed, representing 223 different time samples or,
equivalently, 223 different radii of the propagating acoustic front.

The measurement noise was simulated by adding values of uniformly distributed
random variable to the data. The so-simulated noise was scaled in such a way
that for each time series (one source position) the noise intensity in $L^{2}$
norm was 50\% of the intensity of the signal (i.e. of the $L^{2}$ norm of the
data sequence representing the measuring functional). In spite of such high
level of noise in the data, the reconstructed image shown
in~Figure~\ref{F:recsmoo}(b) contains very little noise. This can also be
verified by looking at the plot of the cross section of the latter image along
the line $x_{2}=0.25$, presented in Figure~\ref{F:smoprof}.
\begin{figure}[h]        %--------------------------------------------------------------------------------
\begin{center}%
\begin{tabular}
[c]{cc}%
\includegraphics[width=2.3in,height=2.3in]{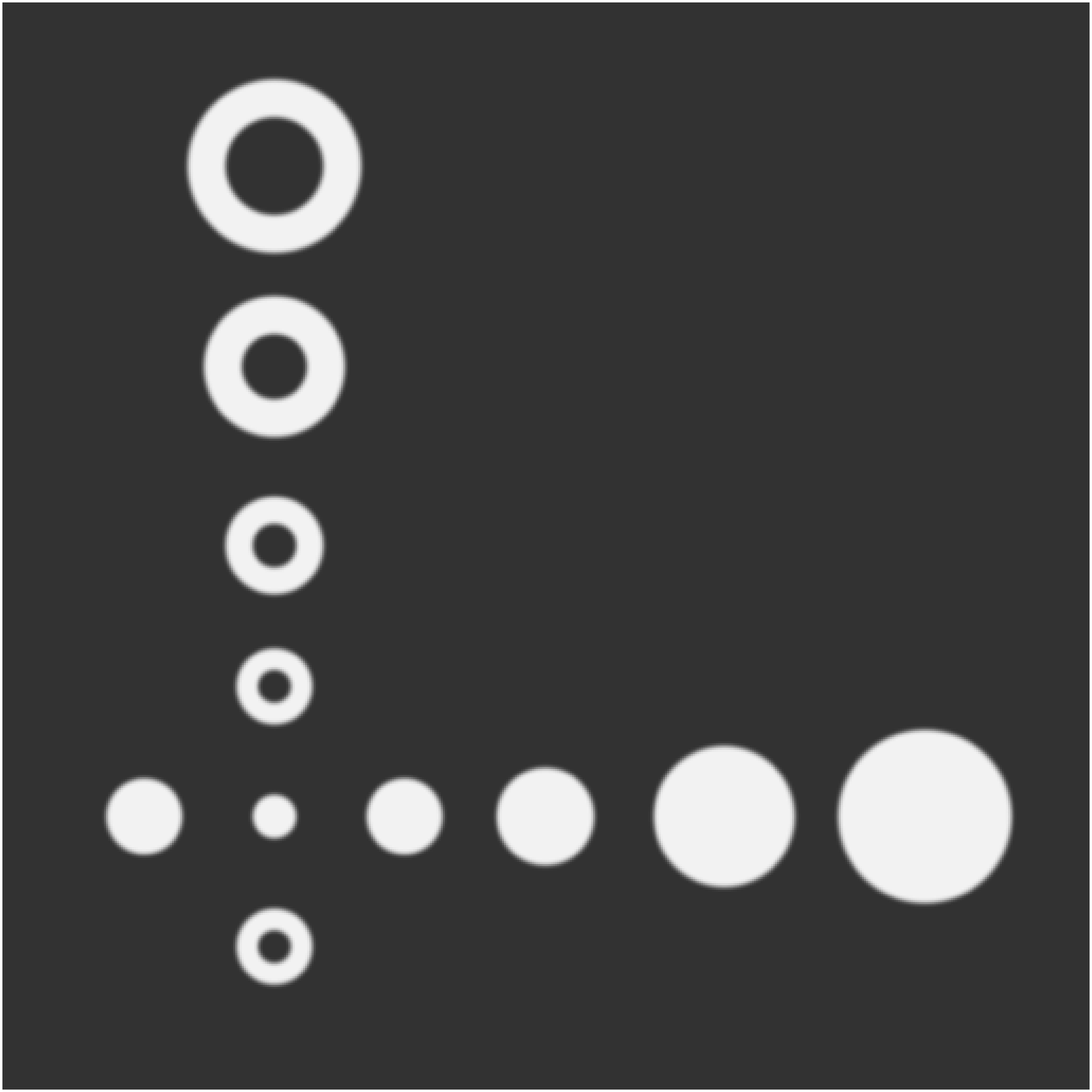} &
\includegraphics[width=2.3in,height=2.3in]{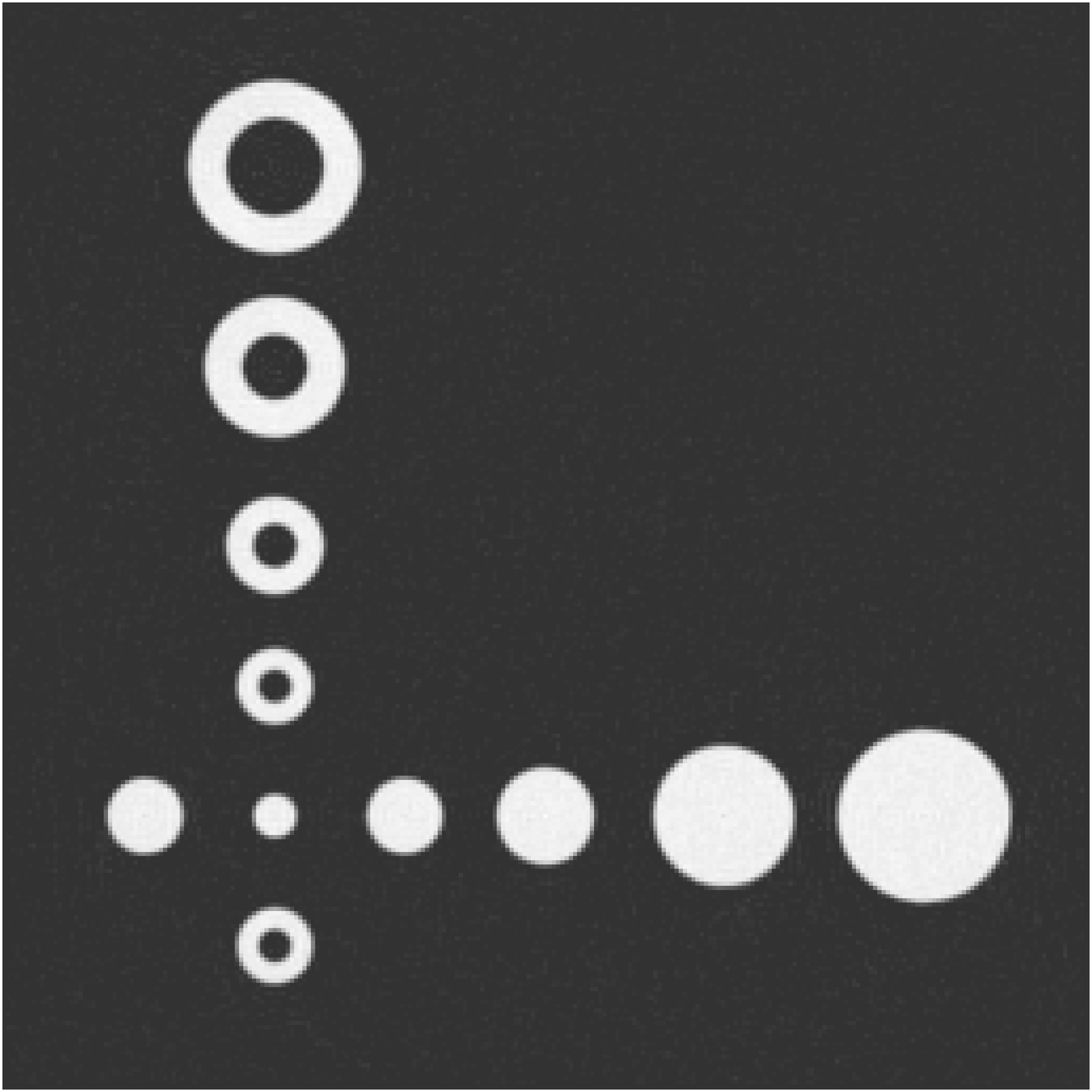}\\
(a) & (b)\\
& \\
\includegraphics[width=2.3in,height=2.3in]{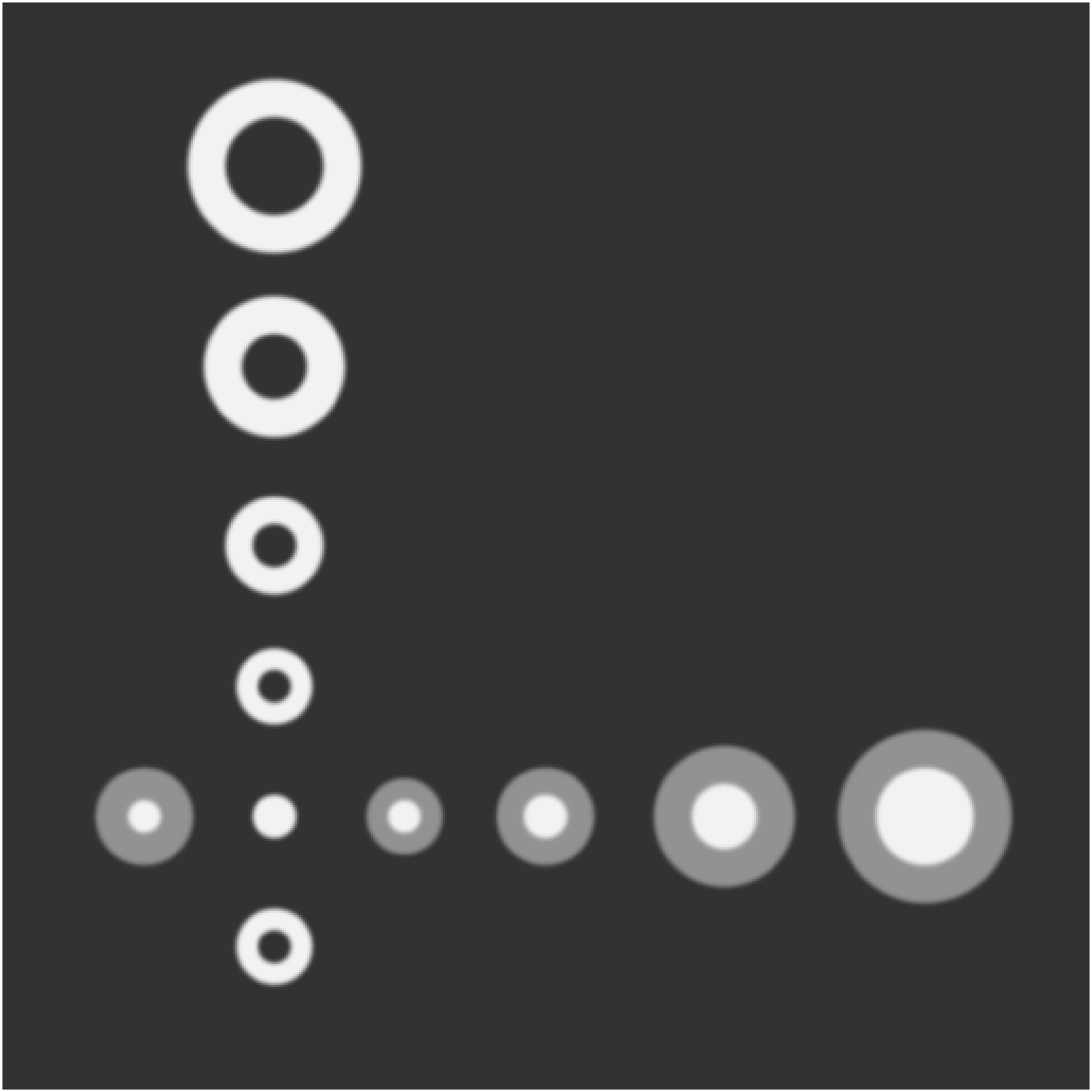} &
\includegraphics[width=2.3in,height=2.3in]{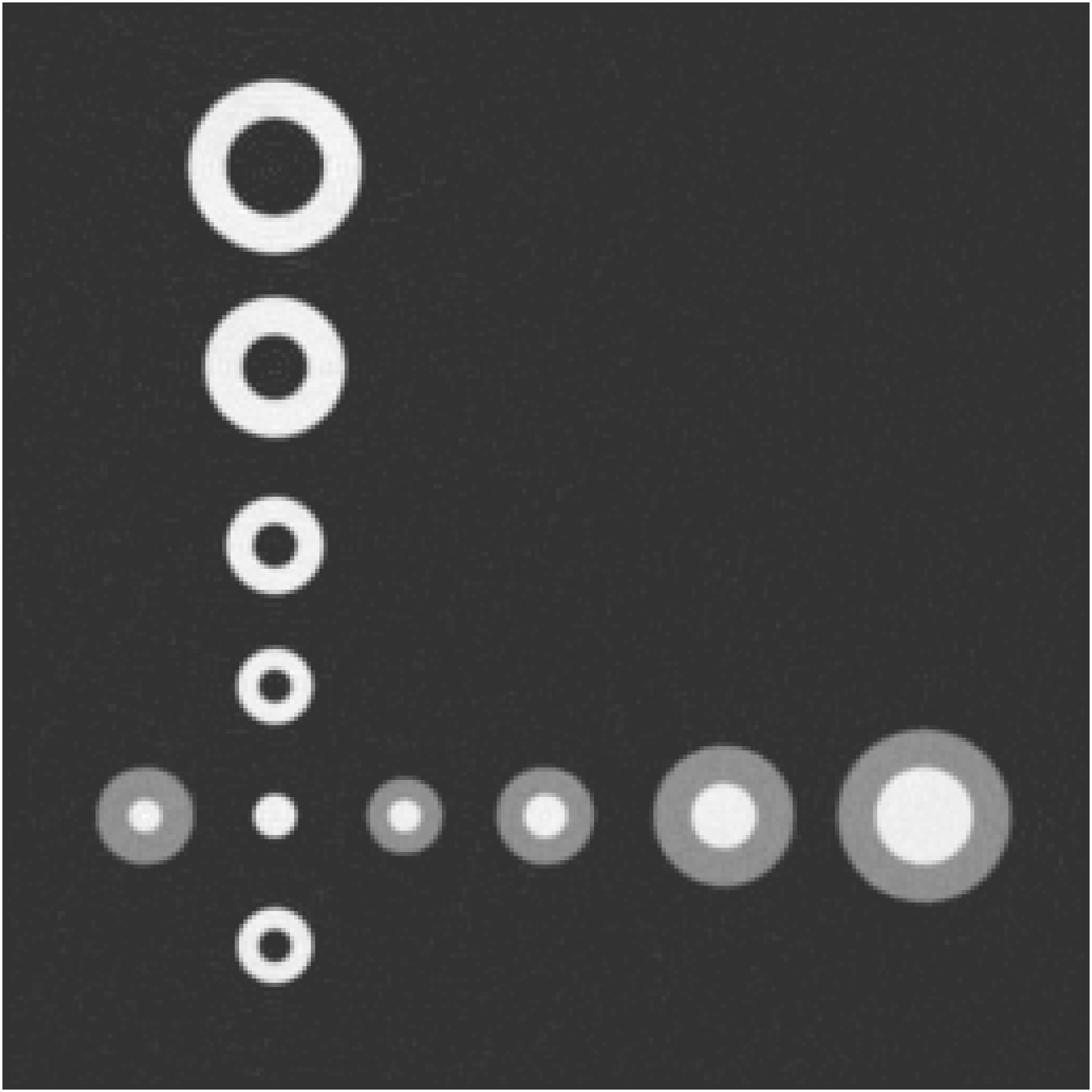}\\
(c) & (d)
\end{tabular}
\end{center}
\caption{Simulation in $3D$ with phantom shown in Figure~\ref{F:3Dphan}; (a)
and (c) are the cross sections of the phantom by planes $x_{2}=0.25$ and
$x_{1}=0.25$, respectively; (b) and (d) are the corresponding cross sections
of the reconstruction from the data with added $100\%$ (in $L_{2}$ sense)
noise}%
\label{F:rec3Dflat}%
\end{figure}

In order to better understand the origins of such unusually low
noise sensitivity, we plot in Figure~\ref{F:sigprof} a profile of one of the
time series, $M_{I_{1},B^{(3)}}(y,t)$ for point $y=(1,0.5,0.5)$. The thick
black line represents the accurate measurements, the gray line shows the with
the added noise. In Figure~\ref{F:crlprof} we plot a profile of the
reconstructed curl $C^{(1)}(x)$ (gray line) against the correct values (black
line). (This plot corresponds to the cross-section of the third component of
$C^{(1)}$ along the line $x_{2}=0.5$, $x_{3}=0.5)$. The latter figure shows
that noise is amplified during the first step of the reconstruction (inversion
of the spherical mean Radon transform). This is to be expected, since the
corresponding inverse problem is mildly ill-posed, similarly to the inversion
of the classical Radon transform. However, on the second step of the
reconstruction, corresponding to solving the problem~(\ref{E:finalsys}), the
noise is significantly smoothed out. This is not surprising, since the
corresponding operator is a smoothing one. As a result, we obtain the
low-noise image shown in Figure~\ref{F:recsmoo}(b).

The second simulation we report used an (almost) piece-wise constant phantom
of $\ln\sigma(x)$ modeled by a linear combination of several slightly smoothed
characteristic functions of balls of different radii. The centers of the balls
were located on the pair-wise intersections of planes $x_{1}=0.25$,
$x_{2}=0.25$, $x_{3}=0.25$, as shown in Figure~\ref{F:3Dphan}. The minimum
value of $\ln\sigma(x)$ in this phantom was 0 (dark black color), the maximum
value is 1 (white color). The simulated MAET data corresponded to the acoustic
sources located at the nodes of $257\times257$ Cartesian grids on each of the
six faces of cubic domain $\Omega$. For each source, a time series consisting
of 447 values for each measuring functional were simulated. In order to model
the noise, to each of the time series we added a random sequence scaled so
that the $L^{2}$ norm of the noise was equal to that of the signal (i.e.
$100\%$ noise was applied).

In Figure~\ref{F:noiseprof} we present the profile of the time series
$M_{I_{1},B^{(3)}}(y,t)$ for the point $y=(1,0.5,0.5)$. As before, the thick
black line represents the accurate measurements, and the gray line shows the
data with added noise.
\begin{figure}[t] %------------------------------------------------------------------------------------------
\begin{center}
\includegraphics[width=3.0in,height=1.4in]{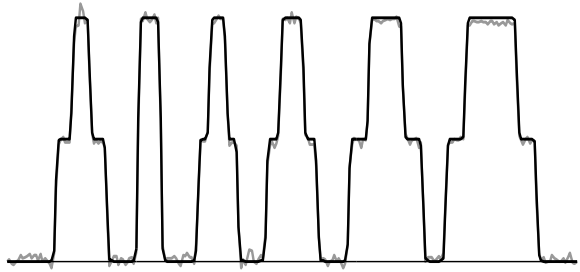}
\end{center}
\caption{ The cross section of the reconstructed image by the line
$x_{1}=0.25$, $x_{3}=0.25$. The thick black line represents the phantom, the
gray line corresponds to the image reconstructed from the data with added
$100\%$ (in $L^{2}$ sense) noise }%
\label{F:coolprof}%
\end{figure}

The reconstruction was performed on the grid of size $257\times257\times257$.
The cross sections of the reconstructed image by planes $x_{1}=0.25$ and
$x_{2}=0.25$ are shown in the Figure~\ref{F:rec3Dflat}(b) and (d), next to the
corresponding images of the phantom (i.e. parts (a) and (c) of the latter
figure). The cross section profile of the image shown in part (d),
corresponding to the line $x_{1}=0.25$, $x_{3}=0.25,$ is plotted in
Figure~\ref{F:coolprof}.

As in the first simulation, we obtain a very accurate reconstruction with
little noise. This is again the result of a smoothing operator applied when
the Poisson problem is solved on the last step of the algorithm. An additional
improvement in the quality of the image comes from the rather singular nature
of the second phantom. Indeed, while the noise is more or less uniformly
distributed over the volume of the cubic domain, the signal (the non-zero
$\ln\sigma$) is supported in a rather small fraction of the volume, thus
increasing the visual contrast between the noise and the signal.

\section*{Final remarks and conclusions}

\subsection*{Mathematical model}

In Section 1 we presented a mathematical model describing the MAET
measurements. In general, it agrees with the model used in
\cite{HaiderXu,Roth}. However, instead of point-wise electrical boundary
measurements we consider a more general scheme. The advantage of such an
approach is generality and ease of analysis and numerical modeling. In
particular, it contains as a partial case the pointwise measurement of
electrical potentials (reported in \cite{HaiderXu,Roth}).

Another novel element in this model is the use of velocity potentials which
allow us to simplify analysis and obtain a better understanding of the problem
at hand. We discussed in detail the case of acoustic signal presented by
propagating acoustic fronts from small sources. However, the same mathematics
can be used to model time-harmonic sources. Since the problem is linear with
respect to the velocity potential, the connection between the two problems is
through the direct and inverse Fourier transforms of the data in time.
Finally, plane wave irradiation (considered for example in \cite{Roth}) is a
partial case of irradiation by time harmonic sources, when they are located
far away from the object.

\subsection*{General reconstruction scheme}

In Section 2 we presented a general scheme for the solution of the inverse
problem of MAET obtained under the assumption of propagating spherical
acoustic fronts. (As we mentioned above, a slight modification of this scheme
would allow one to utilize time harmonic sources and plane waves instead of
the fronts we used). The scheme consists of the following steps:

\begin{enumerate}
\item Apply one of the suitable TAT reconstructions techniques to measuring
functionals $M_{I^{(k)},B^{(j)}}(y,t),$ $j,k=1,2,3,$ to reconstruct the
regular terms $M_{I^{(k)},B^{(j)}}^{\mathrm{reg}}(x,t)$ at $t=0$ and thus to
obtain the curls of $J^{(k)}.$

\item Compute currents $J^{(k)}$ from their curls (this step may require
solving the Neumann problem for the Laplace equation)

\item Find $\nabla\ln\sigma$ at each point in $\Omega$ using formula
(\ref{E:gradsol}) or by solving system of equations (\ref{E:new1}),
(\ref{E:new2}), (\ref{E:new3}).

\item Find values of $\Delta\ln\sigma$ by computing the divergence of
$\nabla\ln\sigma.$

\item Compute $\ln\sigma$ by solving the Poisson problem with the zero
Dirichlet boundary conditions.
\end{enumerate}

Theoretical properties and numerical methods for all three steps are well
known. The first step is mildly ill-posed (similar to the inversion of the
classical Radon transform), the second step is stable, and the third step is
described by a smoothing operator. Our rather informal discussion suggests
that the total reconstruction procedure is stable (it does not exhibit even
the mild instability present in classical computer tomography), and our
numerical experiments confirm this assertion. We leave a rigorous proof of
this conjecture for the future work.

\subsection*{Comparison with AEIT}

MAET is similar to AET in that it seeks to overcome the instability of EIT by
adding the ultrasound component to the electrical measurements. However, MAET
has some advantages:

\begin{enumerate}
\item The arising problem is linear and can be solved explicitly.

\item The AEIT measurements seem to produce a very weak signal; successful
acquisition of such signals in a realistic measuring configuration have not
been reported so far. The signal in MAET is stronger; in fact, first
reconstructions from real measurements have already been obtained
\cite{HaiderXu}.
\end{enumerate}

\subsection*{The case of a rectangular domain}

In Section 3 we presented a completely explicit set of formulae that yield a
series solution of the MAET problem for the case of the cubic domain. It
reduces the problem to a set of sine and cosine Fourier transforms, and thus,
it can be easily implemented using FFTs. This, in turn, results in a fast
algorithm that requires $\mathcal{O}(n^{3}\ln n)$ floating point operations to
complete a reconstructions on a $n\times n\times n$ Cartesian grid.

\subsection*{Feasibility of reconstruction using two directions of $B$}

It is theoretically possible to shorten the potentially long acquisition time
by reducing the number of different directions of $B$. If only two orthogonal
directions of magnetic field $B$ are used, only two components of a curl
$C=\nabla\times J_{I}$ will be reconstructed on the first step of our method
(say $C_{1}$ and $C_{2}).$ However, since $\operatorname{div}%
\operatorname{curl}J=0$,%
\begin{equation}
\frac{\partial}{\partial x_{3}}C_{3}=-\frac{\partial}{\partial x_{1}}%
C_{1}-\frac{\partial}{\partial x_{2}}C_{2}.\nonumber
\end{equation}
Since $C$ vanishes on $\partial\Omega$, the above equation can be integrated
in $x_{3},$ and thus $C_{3}$ can be reconstructed from $C_{1}$ and $C_{2}$. A
further study is needed to see how much this procedure would affect the
stability of the whole method.

\subsection*{Acknowledgements}

The author gratefully acknowledges support by the NSF through the DMS grant 0908208.

\newpage

\section*{Appendix}

Consider the following system of linear equations%
\begin{equation}
\left\{
\begin{array}
[c]{c}%
X\cdot(A\times B)=R_{1}\\
X\cdot(A\times C)=R_{2}\\
X\cdot(B\times C)=R_{3}%
\end{array}
\right.  \label{E:mat0}%
\end{equation}
where $A$, $B$, and $C$ are given linearly independent vectors from
$\mathbb{R}^{3}$, $X\in\mathbb{R}^{3}$ is the vector of unknowns, and $R_{j}$,
$j=1,2,3$, are given numbers. The first equation can be re-written in the
following form%
\begin{equation}
R_{1}=\left\vert
\begin{array}
[c]{ccc}%
x_{1} & x_{2} & x_{3}\\
a_{1} & a_{2} & a_{3}\\
b_{1} & b_{2} & b_{3}%
\end{array}
\right\vert =\left\vert
\begin{array}
[c]{ccc}%
x_{1} & a_{1} & b_{1}\\
x_{2} & a_{2} & b_{2}\\
x_{3} & a_{3} & b_{3}%
\end{array}
\right\vert =\left\vert
\begin{array}
[c]{ccc}%
x_{1} & b_{1} & c_{1}\\
x_{2} & b_{2} & c_{2}\\
x_{3} & b_{3} & c_{3}%
\end{array}
\right\vert \nonumber
\end{equation}
or%
\begin{equation}
r_{1}=\frac{1}{\det M}\left\vert
\begin{array}
[c]{ccc}%
x_{1} & b_{1} & c_{1}\\
x_{2} & b_{2} & c_{2}\\
x_{3} & b_{3} & c_{3}%
\end{array}
\right\vert ,\label{E:mat1}%
\end{equation}
where $M$ is a $(3\times3)$ matrix whose columns are vectors $A$, $B$, and
$C$, and $r_{1}=R_{1}/\det M$. Similarly,%
\begin{equation}
r_{2}=\frac{1}{\det M}\left\vert
\begin{array}
[c]{ccc}%
a_{1} & x_{1} & c_{1}\\
a_{2} & x_{2} & c_{2}\\
a_{3} & x_{3} & c_{3}%
\end{array}
\right\vert ,\label{E:mat2}%
\end{equation}
and%
\begin{equation}
r_{3}=\frac{1}{\det M}\left\vert
\begin{array}
[c]{ccc}%
x_{1} & b_{1} & c_{1}\\
x_{2} & b_{2} & c_{2}\\
x_{3} & b_{3} & c_{3}%
\end{array}
\right\vert ,\label{E:mat3}%
\end{equation}
where $r_{2}=-R_{2}/\det M$ and $r_{3}=R_{3}/\det M$. Formulae (\ref{E:mat1}%
)-(\ref{E:mat3}) can be viewed as the solution of the following system of
equations obtained using Cramer's rule:%
\begin{equation}
\left(
\begin{array}
[c]{ccc}%
a_{1} & b_{1} & c_{1}\\
a_{2} & b_{2} & c_{2}\\
a_{3} & b_{3} & c_{3}%
\end{array}
\right)  \left(
\begin{array}
[c]{c}%
r_{3}\\
r_{2}\\
r_{1}%
\end{array}
\right)  =\left(
\begin{array}
[c]{c}%
x_{1}\\
x_{2}\\
x_{3}%
\end{array}
\right)  .\nonumber
\end{equation}
Therefore, solution of system (\ref{E:mat0}) is given by the formula%
\begin{equation}
X=\frac{1}{\det M}\left(
\begin{array}
[c]{ccc}%
a_{1} & b_{1} & c_{1}\\
a_{2} & b_{2} & c_{2}\\
a_{3} & b_{3} & c_{3}%
\end{array}
\right)  \left(
\begin{array}
[c]{c}%
R_{3}\\
-R_{2}\\
R_{1}%
\end{array}
\right)  .\nonumber
\end{equation}
In addition, $\det M=A\cdot(B\times C)$.

$\bigskip$

$\bigskip$

\bigskip

\newpage

%\section*{References}


\begin{thebibliography}{99}                                                                                               %


\bibitem {AK}M.~Agranovsky and P.~Kuchment, Uniqueness of reconstruction and
an inversion procedure for thermoacoustic and photoacoustic tomography with
variable sound speed, \emph{Inverse Problems} \textbf{23} (2007) 2089--102.

\bibitem {AmbPatch}G.~Ambartsoumian and S.~Patch, Thermoacoustic tomography:
numerical results. Proceedings of SPIE 6437 \emph{ Photons Plus Ultrasound:
Imaging and Sensing 2007: The Eighth Conference on Biomedical Thermoacoustics,
Optoacoustics, and Acousto-optics}, (2007) Alexander A.~Oraevsky, Lihong V.
Wang, Editors, 64371B.

\bibitem {AmmariAET}H.~Ammari, E.~Bonnetier, Y.~Capdeboscq, M.~Tanter, and
M.~Fink, Electrical impedance tomography by elastic deformation, \emph{SIAM J.
Appl. Math.} \textbf{68} (2008) 1557--1573.

\bibitem {AmmariMAET}H.~Ammari, Y.~Capdeboscq, H.~Kang, and A.~Kozhemyak,
Mathematical models and recon\-struction methods in magneto-acoustic imaging,
\emph{Euro. Jnl. of Appl. Math.}, \textbf{20} (2009) 303--17.

\bibitem {BB1}D.~C.~Barber, B.~H.~Brown, Applied potential tomography,
\emph{J.~Phys. E.: Sci.~Instrum.} \textbf{17 }(1984), 723--733.

\bibitem {Bor02}L.~Borcea, Electrical impedance tomography, \emph{Inverse
Problems} \textbf{18} (2002) R99--R136.

\bibitem {burg-exac-appro}P.~Burgholzer, G.~J.~Matt, M.~Haltmeier, and
G.~Paltauf, Exact and approximative imaging methods for photoacoustic
tomography using an arbitrary detection surface, \emph{Phys Review E,}
\textbf{75} (2007) 046706.

\bibitem {Cap}Y.~Capdeboscq, J.~Fehrenbach, F.~de~Gournay, O.~Kavian, Imaging
by modification: numerical reconstruction of local conductivities from
corresponding power density measurements, \emph{SIAM J.~Imaging Sciences,}
\textbf{2/4} (2009) 1003--1030.

\bibitem {CIN}M.~Cheney, D.~Isaacson, and J.~C.~Newell, Electrical Impedance
Tomography, \emph{SIAM Review,} \textbf{41}, (1999) 85--101.

\bibitem {Colton}D.~Colton and R.~Kress, \emph{Inverse acoustic and
electromagnetic scattering theory}, Springer-Verlag (2001).

\bibitem {FPR}D.~Finch, S.~Patch and Rakesh, Determining a function from its
mean values over a family of spheres, \emph{SIAM J.~Math.~Anal.,} \textbf{35}
(2004) 1213--40.

\bibitem {Gilbarg}D.~Gilbarg and N.~S.~Trudinger, Elliptic Partial
Differential Equations of Second Order, Springer-Verlag (1983).

\bibitem {HaiderXu}S.~Haider, A.~Hrbek, and Y.~Xu, Magneto-acousto-electrical
tomography: a potential method for imaging current density and electrical
impedance, \emph{Physiol.~Meas.} \textbf{29} (2008) S41-S50.

\bibitem {HKN}Y.~Hristova, P.~Kuchment, and L.~Nguyen, On reconstruction and
time reversal in thermoacoustic tomography in homogeneous and non-homogeneous
acoustic media, \emph{Inverse Problems,} \textbf{24}: 055006, 2008.

\bibitem {KrugerPAT}R.~A.~Kruger, P.~Liu, Y.~R.~Fang, and C.~R.~Appledorn,
Photoacoustic ultrasound (PAUS) reconstruction tomography, \emph{Med.~Phys.},
\textbf{22 }(1995)\textbf{ }1605--09.

\bibitem {KrugerTAT}R.~A.~Kruger, D.~R.~Reinecke, and G.~A.~Kruger,
Thermoacoustic computed tomography - technical considerations, \emph{Med.
Phys.} \textbf{26} (1999) 1832--7.

\bibitem {KuKuRev}P.~Kuchment and L.~Kunyansky, Mathematics of Photoacoustic
and Thermoacoustic Tomography, Chapter 19, \emph{Handbook of Mathematical
Methods in Imaging,} Springer-Verlag, (2011) 819-865.

\bibitem {KuKuAET}P.~Kuchment and L.~Kunyansky, Synthetic focusing in
ultrasound modulated tomography, \emph{Inverse Problems and Imaging},
\textbf{4} (2010) 665 -- 673.

\bibitem {KuKuAET1}P.~Kuchment and L.~Kunyansky, 2D and 3D reconstructions in
acousto-electric tomography, \emph{Inverse Problems} \textbf{27} (2011) 055013.

\bibitem {Kunyansky}L.~Kunyansky, Explicit inversion formulae for the
spherical mean Radon transform, \emph{Inverse Problems,} \textbf{23} (2007) 737--783.

\bibitem {Kun-series}L.~Kunyansky, A series solution and a fast algorithm for
the inversion of the spherical mean Radon transform, \emph{Inverse Problems,}
\textbf{23} (2007) S11--S20.

\bibitem {Kun-cube}L.~Kunyansky, Reconstruction of a function from its
spherical (circular) means with the centers lying on the surface of certain
polygons and polyhedra, \emph{Inverse Problems, }\textbf{27} (2011) 025012.

\bibitem {Kun-cyl}L.~Kunyansky, Fast reconstruction algorithms for the
thermoacoustic tomography in certain domains with cylindrical or spherical
symmetries, Preprint (2011) Arxiv:Math.AP 1102.1413

\bibitem {Oberw}L.~Kunyansky and P.~Kuchment, Synthetic focusing in
Acousto-Electric Tomography, in \emph{Oberwolfach Report} No.~18/2010 DOI:
10.4171/OWR/2010/18, Workshop: Mathematics and Algorithms in Tomography,
Organised by Martin Burger, Alfred Louis, and Todd Quinto, April 11th -- 17th,
(2010) 44--47.

\bibitem {Lavand}B.~Lavandier, J.~Jossinet and D.~Cathignol, Experimental
measurement of the acousto-electric interaction signal in saline solution,
\emph{Ultrasonics} \textbf{38} (2000) 929--936.

\bibitem {Montalibet}A.~Montalibet, J.~Jossinet, A.~Matias, and D.~Cathignol,
Electric current generated by ultrasonically induced Lorentz force in
biological media, \emph{Med. Biol. Eng. Comput.} \textbf{39 }(2001) 15--20.

\bibitem {Norton2}S.~J.~Norton and M.~Linzer, Ultrasonic reflectivity imaging
in three dimensions: exact inverse scattering solutions for plane,
cylindrical, and spherical apertures, \emph{IEEE Trans. on Biomed. Eng.,}
\textbf{28} (1981) 200--202.

\bibitem {Nguyen}L.~Nguyen, A family of inversion formulas in thermoacoustic
tomography, \emph{Inverse Problems and Imaging}, \textbf{3} (2009) 649--675.

\bibitem {Oraev94}A.~A.~Oraevsky, S.~L.~Jacques, R.~O.~Esenaliev, and
F.~K.~Tittel, Laser-based photoacoustic imaging in biological tissues,
\emph{Proc. SPIE,} \textbf{2134A} (1994) 122--8.

\bibitem {Roth}B.~J.~Roth and K.~Schalte, Ultrasonically-induced Lorentz force
tomography, \emph{Med. Biol. Eng. Comput.} \textbf{47 }(2009) 573--7.

\bibitem {Stewart}A.~M.~Stewart, Longitudinal and transverse components of a
vector field, \emph{Classical Physics}. In press, http://arxiv.org/abs/0801.0335v2.

\bibitem {Vladimirov}V.~S.~Vladimirov, \emph{Equations of mathematical
physics}. (Translated from the Russian by Audrey Littlewood. Edited by Alan
Jeffrey.) Pure and Applied Mathematics, \textbf{3} Marcel Dekker, New York (1971).

\bibitem {WangCRC}Wang L V (Editor) 2009 \emph{"Photoacoustic imaging and
spectroscopy"} (CRC Press).

\bibitem {Wen}H.~Wen, J.~Shah, R.~S.~Balaban, Hall effect imaging, \emph{IEEE
Trans. Biomed. Eng.}, \textbf{45} (1998) 119--24.

\bibitem {MXW1}M.~Xu and L.-H.~V.~Wang, Time-domain reconstruction for
thermoacoustic tomography in a spherical geometry, \emph{IEEE Trans. Med.
Imag.,} \textbf{21} (2002) 814--822.

\bibitem {WangAET}H.~Zhang and L.~Wang, Acousto-electric tomography,
\emph{Proc. SPIE} \textbf{5320} (2004) 145--9.
\end{thebibliography}
\end{document}